\newtheorem{theorem}{Theorem}
\newtheorem{proposition}[theorem]{Proposition}
\theoremstyle{definition}
\def\R{\mathbb{R}}
\def\Z{\mathbb{Z}}
\def\std{{\operatorname{std}}}
\def\rot{{\operatorname{rot}}}
\def\NW{\text{\it X:NW}}
\def\NE{\text{\it X:NE}}
\def\SW{\text{\it X:SW}}
\def\SE{\text{\it X:SE}}
\def\HFKhat{\widehat{\textit{HFK}}}
\def\HFKtilde{\widetilde{\textit{HFK}}}
\def\CFKtilde{\widetilde{\textit{CFK}}}
\def\A{\mathfrak{A}}
\def\B{\mathfrak{B}}
\begin{document}

\title{A family of transversely nonsimple knots}
\author{Tirasan Khandhawit}
\author{Lenhard Ng}
\address{Mathematics Department, Duke
University, Durham, NC 27708}
\email{ng@math.duke.edu}
\urladdr{http://www.math.duke.edu/\~{}ng/}

\begin{abstract}
We apply knot Floer homology to exhibit an infinite family of
transversely nonsimple prime knots
starting with $10_{132}$. We also
discuss the combinatorial relationship between grid diagrams, braids, and
Legendrian and transverse
knots in standard contact $\mathbb{R}^3$.
\end{abstract}

\maketitle


\section{Introduction}

Transverse knots play an important role in contact topology, but
surprisingly little is known about their classification even in the
simplest setting, $\R^3$ with the standard contact structure. Any
transverse knot has an underlying topological knot type, and it also
carries at least one other piece of data, the self-linking number. A
topological knot type in $\R^3$ is \textit{transversely simple} if
transverse knots of that underlying type are completely classified by
their self-linking number; otherwise, it is \textit{transversely
  nonsimple}. Various knots are known to be transversely simple,
including the unknot \cite{bib:Eli}, torus knots \cite{bib:Et2}, and
the figure eight knot \cite{bib:EH}.

It was only recently that some knot types were shown to be
transversely nonsimple. Birman and Menasco \cite{bib:BM2} (see also
\cite{bib:BM1})
used braid theory to find a family of $3$-braids whose knot closures
are transversely nonsimple; Etnyre and Honda \cite{bib:EH2} used
contact-topological techniques to show that the $(2,3)$ cable of the
$(2,3)$ torus knot is transversely nonsimple.

There has been much current effort to develop invariants of transverse
knots that can be used to demonstrate transverse nonsimplicity. The
first (and thus far only) invariant that has been shown to be
effective lies, interestingly, not in contact-topological
constructions like Symplectic Field Theory, but in knot Floer
homology \cite{bib:OSz,bib:Ras}.
The $\widehat{\theta}$ invariant in $\HFKhat$ was introduced
by Ozsv\'ath, Szab\'o, and Thurston \cite{bib:OST} and was employed in
\cite{bib:NOT} to find several examples of transversely nonsimple
knots, including $10_{132}$ and (a reproof of) the Etnyre--Honda cable
example. V\'ertesi \cite{bib:Ver} used the examples of \cite{bib:NOT}
and the behavior of $\widehat{\theta}$ under connected sum to find
infinite families of connected-sum examples of transversely nonsimple
knots; see also \cite{bib:Kaw}.
Most recently, by studying the relationship between contact surgery and
naturality properties of (a differently constructed
version of) $\widehat{\theta}$, Ozsv\'ath and Stipsicz \cite{bib:OS} proved
transverse nonsimplicity for a wide family of two-bridge knots.

Absent the naturality techniques of \cite{bib:OS}, the applications of
$\widehat{\theta}$ to transverse simplicity have used a crude but
surprisingly effective ``vanishing criterion'':
if $T_1$ and $T_2$ are transverse
knots and $\widehat{\theta}(T_1) = 0$ while $\widehat{\theta}(T_2)
\neq 0$, then $T_1$ and $T_2$ are distinct. The $\widehat{\theta}$
invariant lies in the homology of a combinatorial chain complex
introduced in \cite{bib:MOS}, and \cite{bib:NOT} used a computer
program to determine in examples whether $\widehat{\theta}$ is
null-homologous or not. However, reliance on a computer program
obviously limits the number of transversely nonsimple examples that
can be found.

In this paper, we find a two-parameter infinite family of prime,
transversely nonsimple knots that can be distinguished using
the vanishing criterion for $\widehat{\theta}$. The idea is to find
grid diagrams (the structures on which $\widehat{\theta}$ is defined)
where the computation of $\widehat{\theta}$ is short enough to be
carried out by hand. The resulting family of examples is not as
simple in appearance as the two-bridge examples of \cite{bib:OS}, but
has the advantage of needing only the combinatorial description of
$\widehat{\theta}$ and not an analysis of its image under
contact surgery.

Our family is a generalization of the $10_{132}$ example from
\cite{bib:NOT}. A transverse knot can be represented as a braid (see
Section~\ref{ssec:braids}), and a fruitful technique for finding
transversely nonsimple knots is to find braids that are related by a
negative flype (cf.\ \cite{bib:BM2}) and thus represent the same
topological, but not necessarily transverse, knot. In correspondence
with the second author, H.\ Matsuda noted that the $10_{132}$ example
can be expressed as a negative flype, and proposed a one-parameter
family of braids generalizing $10_{132}$.
Here we expand Matsuda's conjectured family to a two-parameter family of braids
related by a negative flype.\footnote{We note in passing that the
  $7_2$ transverse knots in \cite[Figure 11]{bib:NOT}, shown to be
  distinct in \cite{bib:OS}, are also related by a negative flype,
  $\sigma_3^2\sigma_2^2\sigma_3^{-1}\sigma_1^2\sigma_2\sigma_1^{-1}
  \leftrightarrow
  \sigma_3^2\sigma_2^2\sigma_3^{-1}\sigma_1^{-1}\sigma_2\sigma_1^2$, as
  can be checked using the techniques from this paper.}

\begin{theorem}
For any $a,b\geq 0$, the pair of $4$-braids
\[
\sigma_3\sigma_2^{-2}\sigma_3^{2a+2}\sigma_2\sigma_3^{-1}
\sigma_1^{-1}\sigma_2\sigma_1^{2b+2}
\qquad \text{and} \qquad
\sigma_3\sigma_2^{-2}\sigma_3^{2a+2}\sigma_2\sigma_3^{-1}
\sigma_1^{2b+2}\sigma_2\sigma_1^{-1},
\]
related by a negative flype and thus representing the same topological
knot and having the same self-linking number, represent distinct
transverse knots. In particular, the topological knot types given by
these pairs, which are prime, are transversely nonsimple.
\label{thm:main}
\end{theorem}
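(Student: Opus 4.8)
The plan is to realize both braids as grid diagrams and then apply the vanishing criterion to the combinatorial transverse invariant $\widehat{\theta} \in \HFKhat$. The topological and self-linking assertions should come almost for free: a negative flype is a topological isotopy, and since both words are $4$-braids with the same exponent sum $2a+2b+3$, their closures have equal writhe and equal braid index, hence equal self-linking number $sl = (2a+2b+3)-4 = 2a+2b-1$. So the entire content is to show that the two transverse knots are \emph{distinct}, which I would do by proving that $\widehat{\theta}$ vanishes for one braid and is nonzero for the other. By the criterion quoted in the introduction this forces them to be non-transversely-isotopic, and in particular makes each underlying knot type transversely nonsimple.

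First I would pass from each braid word to a grid diagram $G$, following the braid-to-grid correspondence discussed in Section~\ref{ssec:braids}, so that $\widehat{\theta}$ is computed in the combinatorial complex of \cite{bib:MOS}: a complex freely generated over $\Z/2$ by the grid states, with differential counting empty rectangles that avoid the basepoints. I would then identify the canonical cycle representing $\widehat{\theta}$, namely the distinguished generator $x^+$ built from the \NE{} corners of the $X$-markers, as a specific grid state of this complex. The key simplification, and the reason to engineer these particular presentations, is that the large blocks of parallel strands coming from $\sigma_3^{2a+2}$ and $\sigma_1^{2b+2}$ should behave as \emph{spectator} strands: I expect the differentials relevant to $x^+$ to be confined to a bounded region near the flype, so that the computation is uniform in $a$ and $b$.

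For the vanishing side, I would exhibit $x^+$ explicitly as a boundary for the appropriate braid: find a grid state $y$ together with a single empty rectangle from $y$ to $x^+$, and then check that every other rectangle out of $y$ passes through an $X$ or $O$ basepoint and so does not contribute. This basepoint-avoidance bookkeeping is routine once the grid is drawn, and because the extra $\sigma_1,\sigma_3$ powers only insert parallel strands away from $y$, the same $y$ and rectangle should work for every $a,b\geq 0$, giving $\widehat{\theta}=0$ for all parameter values.

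The main obstacle is the non-vanishing side: showing that for the other braid $x^+$ is not a boundary, simultaneously for all $a$ and $b$. Here I cannot merely produce one witness; I must rule out every incoming differential. The plan is to pin down the Maslov and Alexander bigradings of $x^+$, show that the set of grid states sitting one Maslov grading higher in the same Alexander grading that can reach $x^+$ by an empty rectangle is small and explicitly describable, and then argue that no $\Z/2$-combination of their differentials equals $x^+$ --- for instance by passing to a filtration level, or by splitting off the spectator strands to reduce to a small local complex whose homology I can compute by hand. Making this local model genuinely independent of $a$ and $b$, rather than verifying a growing list of cases, is the delicate point and is exactly where the careful choice of grid diagram earns its keep. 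Finally, primeness of the resulting knot types I would handle separately, e.g.\ by matching the closures against a known tabulation for the smallest parameters and arguing that the flype construction introduces no connected-sum decomposition in general.
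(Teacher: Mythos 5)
Your high-level strategy is the paper's: grid diagrams, the vanishing criterion for $\widehat{\theta}$, zero for one braid and nonzero for the other. But three of your steps have genuine gaps, and the largest one sits exactly where the theorem's content lies. For the nonvanishing side, your stated test --- describe the states $y$ with $y\rightarrow x^+$ and check that no $\Z/2$-combination of \emph{their} differentials equals $x^+$ --- is logically insufficient: a chain $c$ with $\partial c = x^+$ may be supported partly on states that do \emph{not} map to $x^+$, whose only role is to cancel the unwanted terms produced by the states that do. One must take the closure of $\{x^+\}$ under alternately passing to predecessors and to all images of those predecessors; this is the $\A_k,\B_k$ algorithm of \cite{bib:NOT}, which the paper runs explicitly on a carefully engineered grid $G_2(a,b)$ so that the whole component lives in a $5\times 5$ corner (states of the form $(\pi(1),\dots,\pi(5),6,7,\dots)$, independent of $a,b$), producing $9$ target states and $12$ source states and finishing with a $12\times 10$ row reduction. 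You correctly identify uniformity in $a,b$ as the delicate point, but you leave it open, and "passing to a filtration level" or "splitting off spectator strands" is not a substitute for actually exhibiting the finite local complex.

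Two further gaps. On the vanishing side, your plan to find a single state $y$ with exactly one empty rectangle to $x^+$ and no other contributing rectangles does not match what actually happens: for the paper's diagram $G_1(a,b)$ the primitive is $y_1+y_2+y_3$, with the extra boundary terms cancelling in pairs, and there is no reason to expect a one-state primitive to exist for some presentation. Relatedly, both computations require hand-picked grids, and one must then \emph{prove} these grids represent the transverse types of $B_1(a,b)$ and $B_2(a,b)$; the paper does this via the $B'(G)$ construction, braid conjugations, and $\sigma_1$- and $\sigma_3$-exchange moves (which preserve transverse type by \cite{bib:BW}), a step your proposal omits entirely. Finally, primeness: checking small parameters against tables and asserting that the construction "introduces no connected-sum decomposition in general" is not an argument. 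The paper uses additivity of braid index minus one under connected sum to reduce any putative decomposition to $T(2,2p+1)\#K'$, and then excludes this by showing the HOMFLY-PT polynomial of $T(2,2p+1)$ fails to divide that of $K(a,b)$, via specializations at $z=0$ and $z=2i$.
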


\noindent
The $10_{132}$ case is $a=b=0$; in this case, the braids in Theorem~\ref{thm:main} are transversely isotopic to $L_1$ and $L_2$, respectively, from \cite[Section 3.1]{bib:OST}. Other small knots in this family, with
the corresponding values of $(a,b)$, include $(0,1) = 12^n_{120}$,
$(1,0) = 12^n_{199}$, $(0,2) = 14^n_{2016}$, $(1,1) = 14^n_{3606}$,
and $(2,0) = 14^n_{5045}$.

A.\ Shumakovitch has noticed that some
(perhaps all) of this family of knots have interesting odd Khovanov
homology \cite{bib:ORS}. More precisely, the six examples listed above
have the unusual feature that their odd Khovanov homology completely vanishes in homological degree $0$. We do not know if this is
a coincidence.

We believe that this two-parameter family is in fact part of a
four-parameter family of transversely nonsimple knots given by the
closures of the braids
\[
\sigma_3\sigma_2^{-2c-2}\sigma_3^{2a+2}\sigma_2\sigma_3^{-2d-1}
\sigma_1^{-1}\sigma_2\sigma_1^{2b+2}
\]
for $a,b,c,d\geq 0$. We have checked several examples using the
computer program of \cite{bib:NOT} but do not have a general proof for
the case $(c,d) \neq (0,0)$.

In order to apply the $\widehat{\theta}$ invariant to braids to prove
Theorem~\ref{thm:main}, we need techniques for translating
between braids, grid diagrams, and Legendrian and transverse knots in
standard contact $\R^3$. These translations are presented in
Section~\ref{sec:translate} and are by now well-known to experts, but
we were unable to find any full write-ups in the literature. In
particular, the algorithms for obtaining a Legendrian knot from a
braid and a braid from a grid diagram may be of independent
interest. We then prove Theorem~\ref{thm:main} in Section~\ref{sec:proof}.

\subsection*{Acknowledgments}

We would like to thank H.\ Matsuda, A.\ Shumakovitch, and D.\ Thurston for
useful discussions. Part of this work also appeared in the first
author's undergraduate honors thesis at Duke University.
The second author is supported by NSF grant
DMS-0706777.


\section{Braids, Grid Diagrams, and Transverse Knots}
\label{sec:translate}

Here we review several different approaches to transverse knots in
standard contact $\R^3$. Most of the material in this section can be
found in the Etnyre survey \cite{bib:Et} or, in the case of grid
diagrams, \cite{bib:OST}. The new content consists of results in
Section~\ref{ssec:translate} giving methods to translate between grid
diagrams, braids, and Legendrian knots, but even these are ``folk
theorems'' that have been floating around the subject for several years.

\subsection{Legendrian and transverse knots}
\label{ssec:legknots}

Let
\[
\xi_\std = \ker (dz-y\,dx)
\]
denote the standard contact structure
on $\R^3$. A \textit{Legendrian knot} in $(\R^3,\xi_\std)$ is an
oriented knot that is everywhere tangent to $\xi_\std$. A
\textit{transverse knot} is an oriented knot that is everywhere
transverse to $\xi_\std$, with the orientation agreeing with the usual
coorientation on $\xi_\std$; that is, $dz-y\,dx > 0$ along the
orientation of a transverse knot. Any smooth knot in $\R^3$ can be $C^0$
perturbed to both Legendrian and transverse knots. We consider
Legendrian and transverse knots up to \textit{Legendrian} and
\textit{transverse isotopy}, isotopy through Legendrian and transverse
knots, respectively.

There is a many-to-one correspondence between Legendrian and
transverse knots. Any Legendrian knot $L$ can be perturbed to a
transverse knot $L_+$, the \textit{positive pushoff} of
$L$, by pushing each point on $L$ in a direction transverse to the
contact plane; the positive pushoff is unique up to transverse
isotopy. There is an ``inverse'' operation that perturbs any
transverse knot to a Legendrian knot, but this is only well-defined up
to Legendrian isotopy and negative stabilization/destabilization of
Legendrian knots (see below). Thus one can view transverse knots up to
transverse isotopy as Legendrian knots up to Legendrian isotopy and
negative de/stabilization.

A convenient way to depict Legendrian and transverse knots is through
their \textit{front projections} to the $xz$ plane. The front
projection of a generic Legendrian knot has no vertical tangencies and
has only double points and semicubical cusps as singularities. At each
double point, the arc of more negative slope passes over the arc of
more positive slope. Any front of this type is the front projection of
a Legendrian knot.

\begin{figure}
\centerline{
\includegraphics[height=0.5in]{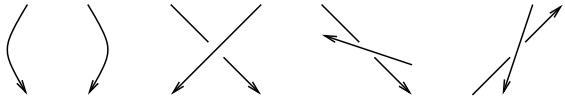}
}
\caption{Forbidden segments in the front projection of a transverse
  knot.}
\label{fig:forbidden}
\end{figure}

On the other hand, the front projection of a
generic transverse knot is a standard knot diagram, with only
double points as singularities, but with two restrictions: any point
in the projection with a vertical tangency must be oriented upwards,
and at a crossing, we cannot simultaneously have the overcrossing arc pointing
to the left, the undercrossing arc pointing to the right, and the
overcrossing arc of greater slope than the undercrossing arc. See
Figure~\ref{fig:forbidden}. Any knot projection without these
forbidden segments is the front projection of a transverse knot,
unique up to transverse isotopy, and two transverse knots are
transversely isotopic if and only if their front projections are
isotopic through diagrams that do not contain any forbidden segments.

\begin{figure}
\centerline{
\includegraphics[height=1.2in]{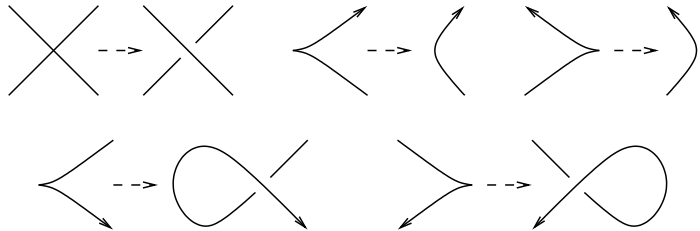}
}
\caption{
Obtaining the front of the transverse pushoff from a Legendrian front.
}
\label{fig:pushoff}
\end{figure}

With front projections, it is easy to see the correspondence between
Legendrian and transverse knots. The front of a Legendrian knot can be
turned into the front of its positive transverse pushoff by smoothing
out upward-pointing cusps and replacing downward-pointing cusps by
loops; see Figure~\ref{fig:pushoff}. In a related vein, we define the
\textit{positive}
and \textit{negative stabilizations} $S_{\pm}(L)$ of a Legendrian knot
$L$ to be the
Legendrian knots whose fronts are obtained from the front of $L$ by
adding in a zigzag whose cusps point downward or upward; see
Figure~\ref{fig:stab}. Both stabilizations are well-defined up to
Legendrian isotopy. It is clear from the front picture that a
Legendrian knot and its negative stabilization have positive
transverse pushoffs that are transversely isotopic. A result due in the
$\R^3$ case to \cite{bib:EFM} states that two Legendrian knots are
related by Legendrian isotopy and negative de/stabilization if and
only if their positive transverse pushoffs are related by transverse
isotopy.

\begin{figure}
\centerline{
\includegraphics[height=0.4in]{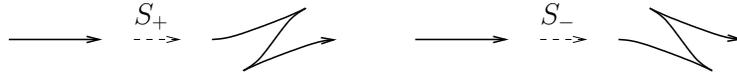}
}
\caption{
Positive and negative stabilizations of a Legendrian front.
}
\label{fig:stab}
\end{figure}

\subsection{Grid diagrams}
\label{ssec:grid}

Closely related to front projections is another fruitful method of
representing Legendrian knots, via grid diagrams. A \textit{grid
  diagram} is an $n\times n$ square grid with a collection of $n$
$X$'s and $n$ $O$'s in the grid, such that each row or column
contains exactly one $X$ and one $O$, and no square in the grid
contains both an $X$ and an $O$.

One obtains a knot (or link) diagram from a grid diagram by connecting $O$'s
to $X$'s horizontally, connecting $X$'s to $O$'s vertically, and
stipulating that horizontal segments always pass over vertical
segments whenever they cross. (Note that this is the opposite of the
standard convention for grid diagrams.) In this way, any knot can be
represented by a grid diagram. Indeed, we can view a grid diagram $G$ as
(the front of) a Legendrian knot $L(G)$ by turning it $45^\circ$ clockwise,
smoothing upward- and downward-pointing corners, and turning leftward-
and rightward-pointing corners into cusps. See
Figure~\ref{fig:braidify} below. Any Legendrian knot is
Legendrian isotopic to a knot obtained in this way from some grid
diagram.

There is a sequence of elementary moves on grid diagrams, the Cromwell
moves \cite{bib:Cro}, that relate any two grid diagrams that represent
topologically isotopic knots: torus translation, commutation, and
stabilization/destabilization. The stabilization moves divide further
into (essentially) four types, labeled \NW, \NE, \SW, and
\SE~ in the notation of \cite{bib:OST}. Of the Cromwell moves, torus
translation, commutation, and \NW~ and \SE~ de/stabilization
preserve Legendrian isotopy type, while \NE~ (resp.\ \SW)
stabilization is positive (resp.\ negative) stabilization in the
Legendrian category.

\subsection{Braids and transverse knots}
\label{ssec:braids}

In some sense, the role played by grid diagrams for Legendrian knots
is played by braids for transverse knots. Let
\[
\xi_\rot = \ker(dz-y\,dx+x\,dy)
\]
denote the rotationally symmetric tight contact structure on $\R^3$.
There is a diffeomorphism $\phi$ of $\R^3$, given by $\phi(x,y,z) =
(x,2y,xy+z)$, that sends $\xi_\rot$ to $\xi_\std$. We can define
transverse knots for $\xi_\rot$ in the same way as transverse knots
for $\xi_\std$, and $\phi$ sends a knot tranverse to
$\xi_\rot$ to a knot transverse to $\xi_\std$. Thus we can view
any knot transverse to $\xi_\rot$ as a transverse knot in the sense of
Section~\ref{ssec:legknots}.

The closed curve $\{(\cos t,\sin t,0)\,|\,0\leq t\leq 2\pi\}$ traces
out an unknot $T_0$ transverse to $\xi_\rot$. We can then view
any braid $B$ as a transverse knot as follows. Embed the closure of
$B$ in a solid torus, and embed this solid torus as a small tubular
neighborhood of $T_0$. The braid then becomes a knot (or link) $T(B)$ in
$\R^3$ transverse to $\xi_\rot$, and can be mapped to a transverse
knot $\phi(T(B))$ in $(\R^3,\xi_\std)$ via the contactomorphism $\phi$.

Braids that are conjugate in the braid group yield transversely isotopic
knots. More interestingly, let a \textit{positive braid stabilization}
be the operation that replaces a braid $B\in B_n$ by $B\sigma_n \in
B_{n+1}$. Then we have the following result.

\begin{proposition}[Transverse Markov Theorem \cite{bib:OSh,bib:Wr}]
Let $B_1,B_2$ be braids. Then $T(B_1),T(B_2)$ are transversely
isotopic in $(\R^3,\xi_\rot)$ if and only if $B_1,B_2$ are related by
a sequence of braid conjugations and positive braid stabilizations and
destabilizations.
\end{proposition}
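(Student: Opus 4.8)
The plan is to prove the two directions separately, working throughout with $\xi_\rot$, where the braid picture is most transparent: $T(B)$ sits as a closed braid in a tubular neighborhood of the transverse unknot $T_0$, positively transverse to the disk fibration of the complement of the braid axis (the $z$-axis). The ``if'' direction is routine and the ``only if'' direction carries all the content.

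For the ``if'' direction I would verify that each allowed move is realized by an ambient transverse isotopy. Conjugation $B \mapsto gBg^{-1}$ is realized by an isotopy through closed braids in the solid torus (sliding the factor $g$ around the closure); since every closed braid is positively transverse to the pages and the isotopy can be taken to preserve $\xi_\rot$, this is a transverse isotopy. For positive stabilization $B \mapsto B\sigma_n$, the closure gains a single positive Reidemeister-I kink supported in a ball near $T_0$, which can be realized by a local transverse isotopy. The key numerical check is that the self-linking number $sl(T(B)) = w(B) - n$ (with $w$ the exponent sum) is preserved, since positive stabilization sends $(w,n)\mapsto(w+1,n+1)$. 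This same computation explains why negative stabilization, sending $(w,n)\mapsto(w-1,n+1)$ and hence $sl\mapsto sl-2$, cannot be a transverse isotopy.

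The ``only if'' direction is the heart of the theorem. I would first invoke the transverse analogue of Alexander's theorem (Bennequin): every transverse knot is transversely isotopic to $T(B)$ for some braid $B$, so it suffices to analyze a transverse isotopy $\{K_t\}_{t\in[0,1]}$ between two transverse braids $K_0 = T(B_1)$ and $K_1 = T(B_2)$. After a generic perturbation, $K_t$ remains positively transverse to the pages of the disk fibration for all but finitely many times $t_1 < \cdots < t_k$, so that between consecutive bad times $K_t$ is a closed braid whose conjugacy class is constant. At each bad time a page becomes tangent to the knot, and the problem reduces to classifying these tangency degenerations and showing each realizes one of the allowed moves.

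The main obstacle, and the technical core of both proofs in \cite{bib:OSh,bib:Wr}, is precisely this local analysis of the bad times together with a genericity argument ruling out all other degenerations. One must show that the only transitions produced are braid relations and conjugations, which leave the transverse braid unchanged up to the allowed moves, and stabilizations/destabilizations, and that the positivity of the transverse condition (equivalently, the behavior of $sl$ computed above) forces every such stabilization to be positive. Making this rigorous requires either the braid-foliation machinery of Birman--Menasco adapted to the contact setting (as in Wrinkle's approach) or the pseudoholomorphic/braided-surface techniques of Orevkov--Shevchishin; in both cases the delicate point is controlling the one-parameter family near the singular times so that each elementary transition is a permitted move, and in particular excluding any transition that would decrease the self-linking number.
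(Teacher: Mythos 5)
The paper does not prove this proposition at all: it is imported verbatim from Orevkov--Shevchishin and Wrinkle, so there is no in-paper argument to compare yours against. Judged on its own terms, your write-up is a correct and well-organized account of the \emph{strategy} of those proofs, but it is not a proof. The ``if'' direction is essentially complete: conjugation is an isotopy through closed braids (hence transverse), positive stabilization is a local transverse kink removal, and your self-linking computation $sl = w - n$ correctly explains why the sign matters. The gap is in the ``only if'' direction, where everything after ``the problem reduces to classifying these tangency degenerations'' is an assertion of what must be shown rather than a demonstration. In particular, the claim that the only transitions at the non-generic times are braid relations, conjugations, and de/stabilizations is exactly the content of the theorem; nothing in your argument rules out, say, an exchange-type or more degenerate transition a priori, and the $sl$ computation only forces positivity \emph{after} one already knows the transition is a stabilization. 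You are candid that making this rigorous requires the braid-foliation machinery (Wrinkle) or the pseudoholomorphic techniques (Orevkov--Shevchishin), which is the honest thing to say, but it means the core of the theorem is deferred to the very references the proposition cites rather than proven.

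Since the authors likewise treat this as a black box, your sketch is at the same level of rigor as the paper's own treatment and is a fair summary of where the difficulty lives. If you wanted to upgrade it to an actual proof you would need to carry out the local analysis at the singular times: set up the singular braid foliation (or the braided-surface model) for the trace of the isotopy, enumerate the elementary changes it can undergo under a generic perturbation, and verify move by move that each one is realized by conjugation, a braid relation, or a positive de/stabilization. That enumeration, not the self-linking bookkeeping, is the theorem.
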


\subsection{Translating between the three pictures}
\label{ssec:translate}

Given a braid word $B$, one can create the front of a Legendrian knot
$L(B)$ in a natural way as shown in Figure~\ref{fig:frontify}, cf.\
\cite{bib:Kal}. Draw the braid from left to right; each positive
crossing becomes part of the front in the obvious way, while each
negative crossing is represented by a zigzag and crossing in the
front. The corresponding left and right ends of the braid are then
connected through arcs with one left cusp and one right cusp apiece.

\begin{figure}
\centerline{
\includegraphics[height=1.5in]{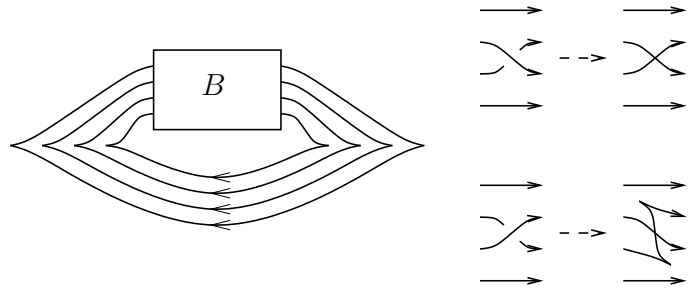}
}
\caption{
The Legendrian front $L(B)$ obtained from a braid word $B$.
}
\label{fig:frontify}
\end{figure}

We note that $L(B)$ is associated to a braid word and not a braid
isotopy class. If $B$ changes by braid isotopy,$L(B)$ changes
by a combination of Legendrian isotopies and negative Legendrian
de/stabilizations: replacing $\sigma_i\sigma_{i+1}\sigma_i$ by
$\sigma_{i+1}\sigma_i\sigma_{i+1}$ preserves Legendrian isotopy type,
while introducing $\sigma_i\sigma_i^{-1}$ or $\sigma_i^{-1}\sigma_i$
corresponds to one negative stabilization. Thus, for $B$ a braid,
$L(B)$ is only well-defined up to negative Legendrian
de/stabilization; however, the positive transverse pushoff
$L(B)_+$ constitutes a well-defined transverse isotopy class. 

In addition, it is straightforward to check that changing $B$ by
positive braid stabilization preserves the Legendrian isotopy, while
conjugating $B$ in the braid group changes $L(B)$ at most by negative
Legendrian de/stabilization (for the latter, see also the appendix on
the Legendrian satellite construction in \cite{bib:NT}). 
We conclude by the Transverse Markov Theorem that if two
braids $B_1,B_2$ close to transversely isotopic knots in
$(\R^3,\xi_\rot)$, then the positive transverse pushoffs $L(B_1)_+$,
$L(B_2)_+$ are transversely isotopic knots in $(\R^3,\xi_\std)$.
In fact, any braid $B$, viewed as a transverse knot
in $(\R^3,\xi_\rot)$, is the same as $L(B)_+$, viewed as a transverse
knot in $(\R^3,\xi_\std)$.

\begin{proposition}
Let $\phi$ be the contactomorphism between $(\R^3,\xi_\rot)$ and
$(\R^3,\xi_\std)$ from Section~\ref{ssec:braids}. If $B$ is a braid
and $T(B)$ is the transverse knot in $(\R^3,\xi_\rot)$ corresponding
to $B$, then $\phi(T(B))$ and $L(B)_+$ are transversely isotopic
knots in $(\R^3,\xi_\std)$.
\label{prop:comm1}
\end{proposition}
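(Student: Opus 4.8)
The plan is to compute the front projection of the transverse knot $\phi(T(B))$ explicitly and to recognize it, up to isotopy through non-forbidden fronts, as the front of $L(B)_+$. By the characterization recalled in Section~\ref{ssec:legknots}, two transverse knots in $(\R^3,\xi_\std)$ are transversely isotopic precisely when their front projections are isotopic through diagrams containing no forbidden segments (Figure~\ref{fig:forbidden}); so it suffices to produce such an isotopy between the two computed fronts.

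First I would set up coordinates adapted to the rotational symmetry of $\xi_\rot$. Writing $x=r\cos\theta$, $y=r\sin\theta$, one has $-y\,dx+x\,dy=r^2\,d\theta$, so $\xi_\rot=\ker(dz+r^2\,d\theta)$ and the transverse unknot $T_0$ is the circle $r=1$, $z=0$ traversed with $\theta=t$ increasing. I would realize $T(B)$ as the braid $B$ sitting in the normal disk bundle of $T_0$: a point of $T(B)$ has the form
\[
\big((1+\rho)\cos t,\ (1+\rho)\sin t,\ \zeta\big),
\]
where $t\in[0,2\pi]$ is the angular parameter and $(\rho,\zeta)$ are small coordinates in the normal disk recording the positions of the $n$ braid strands, which permute and cross according to the word $B$ as $t$ increases. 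Transversality to $\xi_\rot$ amounts to $d\zeta/dt+(1+\rho)^2>0$, which holds since the strands wind monotonically in $t$ with small normal excursions. Pushing this through $\phi(x,y,z)=(x,2y,xy+z)$ and projecting to the $xz$-plane gives the front
\[
\Big((1+\rho)\cos t,\ \tfrac12(1+\rho)^2\sin 2t+\zeta\Big).
\]
As $t$ runs over $[0,2\pi]$ the $x$-coordinate sweeps right and left, so this front doubles back at the turnaround angles $t=0,\pi$ and wraps around exactly as the closure of a braid front does.

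The heart of the argument is then a local comparison of this front with the recipe of Figures~\ref{fig:frontify} and~\ref{fig:pushoff}. I would check that each positive crossing of $B$ projects to an ordinary front crossing; that each negative crossing projects to the zigzag-plus-crossing that defines $L(B)$ (this is precisely the origin of the zigzag convention for negative crossings); and that the two turnarounds, where the $x$-coordinate has a vertical tangency, correspond to the left and right cusps of $L(B)$, which under the positive pushoff become upward-oriented smooth turnarounds and downward loops according to whether the sign of $\tfrac12(1+\rho)^2\sin 2t+\zeta$ makes the cusp point up or down. Matching these local models identifies the computed front with $L(B)_+$ up to a front isotopy. Transversality of $\phi(T(B))$ itself requires no checking, as $\phi$ is a contactomorphism and $T(B)$ is transverse to $\xi_\rot$; the content is entirely in producing the connecting diagram isotopy.

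I expect the main obstacle to be the bookkeeping at the two turnaround angles and at the negative crossings. One must confirm that the doubling-back of the $x$-coordinate and the sign of $\tfrac12(1+\rho)^2\sin 2t+\zeta$ produce turnarounds and loops with exactly the handedness prescribed by the pushoff recipe, and that the connecting isotopy can be chosen to remain free of forbidden segments throughout (in particular that every vertical tangency stays upward-oriented). Once the turnaround and crossing models are pinned down, the remainder is a routine forbidden-free diagram isotopy.
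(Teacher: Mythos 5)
Your proposal is correct and follows essentially the same route as the paper: both compute the front projection of $\phi(T(B))$ (each unbraided strand becoming a figure-eight $\bigl(\cos t,\tfrac12\sin 2t\bigr)$ up to small perturbation), localize the braiding in one region of the front, and then identify the result with $L(B)_+$ by an isotopy through forbidden-segment-free transverse fronts, checking the turnaround/loop and negative-crossing local models. The only detail you gloss over that the paper makes explicit is separating the $n$ nested figure-eights by a transverse isotopy so they meet only in the braiding region, which is part of the routine isotopy you already anticipate.
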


\begin{figure}
\centerline{
\includegraphics[height=2.4in]{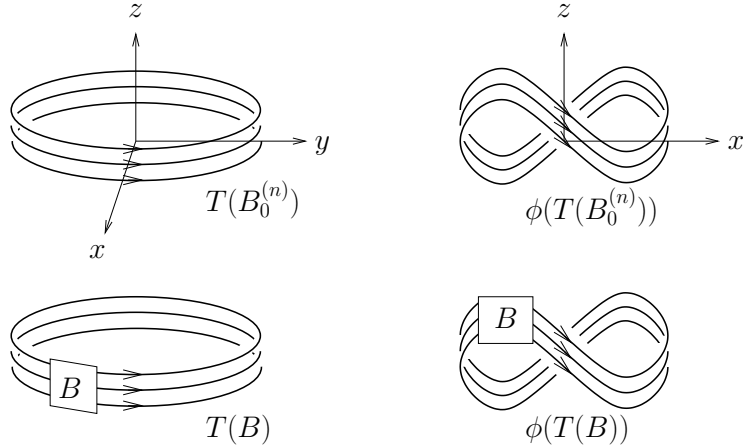}
}
\caption{
Transverse knots $T(B_0^{(n)}), T(B)$ in $\xi_\rot$ and the corresponding
fronts of transverse knots in $\xi_\std$.
}
\label{fig:transversify}
\end{figure}

\begin{proof}
Under $\phi$, the standard transverse unknot $T_0$ in $\xi_\rot$ maps
to an unknot whose front projection is a figure $8$. For any $n$, view
the trivial $n$-component braid $B_0^{(n)}$ as the transverse link in
$\xi_\rot$ defined by
\[
T(B_0^{(n)}) = \{(\cos t,\sin t,k\epsilon)\,|\,0\leq t\leq
2\pi,~k=0,\dots,n-1\}
\]
for small $\epsilon>0$. The
front projection of $\phi(T(B_0^{(n)}))$ is a collection of $n$ figure
$8$'s that differ by $\epsilon$ translations in the $z$ direction; see
Figure~\ref{fig:transversify}.

Let $B$ be a braid with $n$ strands. In cylindrical coordinates
$(r,\theta,z)$ on $\R^3$, we can choose $T(B)$ to agree with
$T(B_0^{(n)})$ except in a neighborhood of $\theta = 5\pi/4$, where the
entire braid $B$ lives. Then the front of $\phi(T(B))$ agrees with
$\phi(T(B_0^{(n)}))$ except in the braiding region near $(x,z) =
(-1/\sqrt{2},1/2)$.

\begin{figure}
\centerline{
\includegraphics[height=3in]{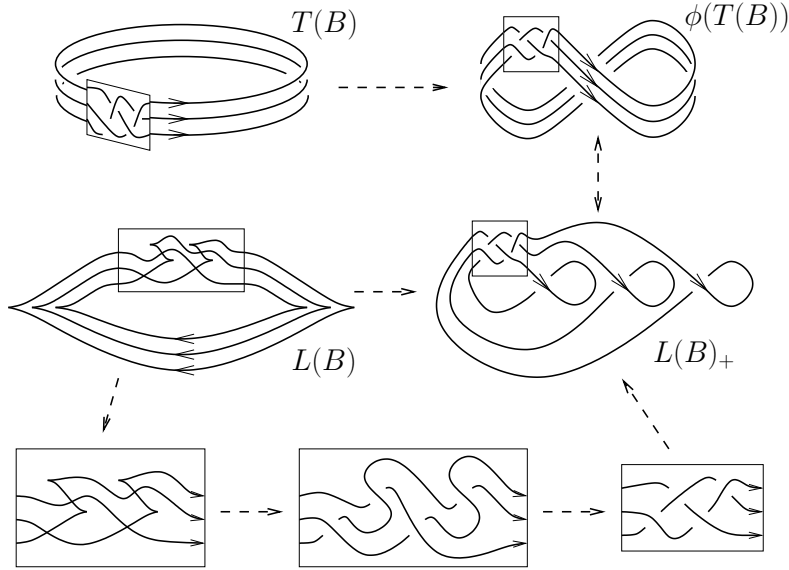}
}
\caption{
Transverse isotopy between $\phi(T(B))$ and $L(B)_+$. The top row is
as in Figure~\ref{fig:transversify}. The middle row shows the positive
transverse pushoff of $L(B)$, resulting in a transverse front that is
isotopic to the front for $\phi(T(B))$. The bottom row shows a detail
of the braiding region for the fronts of $L(B)$; $L(B)_+$; and
$L(B)_+$ after a transverse isotopy.
}
\label{fig:transversify2}
\end{figure}

In the braiding region for the front of $\phi(T(B))$, we can draw $B$
in the standard way, such that each strand goes from left to right
without vertical tangencies. We can then modify the front of
$\phi(T(B))$ by a transverse isotopy so that the $n$ figure $8$'s do
not intersect anywhere outside of the braiding region. The result is a
transverse front that is transversely isotopic to the front of
$L(B)_+$; see Figure~\ref{fig:transversify2}. The result follows.
\end{proof}

\begin{figure}
\centerline{
\includegraphics[height=2.5in]{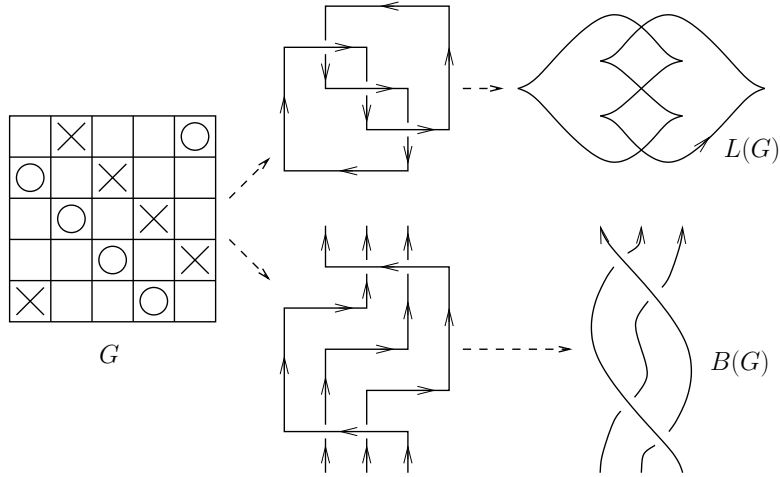}
}
\caption{
Constructing a Legendrian knot $L(G)$ and a braid $B(G)$ from a grid
diagram $G$. In this example, we read $B(G)$ bottom to top to get
$B(G) = \sigma_1^{-1}\sigma_2^{-1}\sigma_1^{-1}\sigma_2^{-1}$.
}
\label{fig:braidify}
\end{figure}

We next turn to the relation between grid diagrams and braids. Given
any grid diagram $G$, one can construct a braid $B(G)$ as
follows (cf.\ \cite{bib:Cro,bib:Dyn,bib:NOT}).
Connect $O$'s to $X$'s horizontally as usual. When an $O$
lies vertically above an $X$, connect them; when an $O$ lies
vertically under an $X$, draw two vertical line segments, one from the
$O$ down to the bottom of the grid diagram, one from the $X$ up to the
top of the grid diagram. Whenever two line segments cross, have the
horizontal segment cross over the vertical segment as before. We can
now orient all segments so that $O$'s point to $X$'s horizontally as
usual, and all vertical segments are oriented upwards. The result can
be viewed as a braid from the bottom of the grid diagram to the
top. See Figure~\ref{fig:braidify}. We remark that the closure of
$B(G)$ is isotopic to the knot given by $G$, and that any braid is $B(G)$
for some grid diagram $G$.

To a grid diagram $G$, we have now associated a Legendrian knot $L(G)$
and a braid $B(G)$. The following result is a compatibility result for
these two constructions as well as the construction $L(B)$.

\begin{proposition}
If $G$ is a grid diagram, then $L(G)_+$ and $L(B(G))_+$ are
transversely isotopic; that is, the Legendrian knots $L(G)$ and
$L(B(G))$ are related by Legendrian isotopy and negative de/stabilization.
\label{prop:comm2}
\end{proposition}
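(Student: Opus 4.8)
The plan is to show that the Legendrian knots $L(G)$ and $L(B(G))$ are related by Legendrian isotopy together with negative de/stabilizations, which by the result of \cite{bib:EFM} cited in Section~\ref{ssec:legknots} is equivalent to the transverse isotopy of their positive pushoffs $L(G)_+$ and $L(B(G))_+$. Since both $L(G)$ and $L(B(G))$ are constructed from the same grid diagram $G$, the strategy is to track how each construction turns the combinatorial data of $G$ into a Legendrian front and then compare the two fronts directly, move by move, in the spirit of Figure~\ref{fig:braidify}.

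First I would fix the geometric picture. Recall that $L(G)$ is obtained by rotating $G$ by $45^\circ$ clockwise, smoothing the upward- and downward-pointing corners, and converting the leftward- and rightward-pointing corners into cusps. Meanwhile $B(G)$ is obtained by the braiding procedure: horizontal $O$-to-$X$ segments are kept, an $O$ directly above an $X$ is joined by a short vertical segment, and an $O$ below an $X$ is ``cut'' by running one strand down to the bottom and one up to the top of the grid, with horizontal-over-vertical crossings throughout. The braid $B(G)$ then feeds into the construction $L(B(G))$ of Figure~\ref{fig:frontify}, where positive crossings pass straight through, negative crossings become zigzags, and the strands are closed off with one left and one right cusp per strand. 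The core of the argument is to observe that both $L(G)$ and $L(B(G))$ record the \emph{same} sequence of crossings coming from the same columns of $G$; the only discrepancies are (i) where the braid closure introduces extra cusps relative to the direct $45^\circ$ rotation, and (ii) where the ``cutting'' of an $O$-below-$X$ column in $B(G)$ replaces a smooth turnaround in $L(G)$ by a pair of strands running to the grid boundary.

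Next I would account for those two discrepancies as explicit Legendrian moves. Each turnaround that $L(G)$ resolves by smoothing a corner, but which $B(G)$ resolves by cutting and reclosing, should differ by a Legendrian isotopy together with the insertion of a canceling pair $\sigma_i\sigma_i^{-1}$ or $\sigma_i^{-1}\sigma_i$; by the discussion preceding Proposition~\ref{prop:comm1}, introducing such a pair is exactly a \emph{negative} Legendrian stabilization, which is permitted. The closure arcs of $L(B(G))$, contributing one left and one right cusp per strand, must likewise be matched against the cusps arising directly in $L(G)$ from the leftward- and rightward-pointing grid corners; I would check that after the isotopy these cusps correspond, again up to negative de/stabilization, so that no positive stabilization is ever forced. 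The cleanest way to organize this is probably to verify it first on the running example of Figure~\ref{fig:braidify}, where $B(G) = \sigma_1^{-1}\sigma_2^{-1}\sigma_1^{-1}\sigma_2^{-1}$, and then argue that the local nature of each grid feature (a horizontal run, a vertical join, or a cut column) makes the correspondence completely local and hence valid for an arbitrary $G$.

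The main obstacle I anticipate is bookkeeping the \emph{signs} of the stabilizations: the whole statement hinges on showing that the two fronts differ only by \emph{negative} de/stabilizations and never positive ones, since a positive stabilization would change the transverse pushoff and break the claimed transverse isotopy. Tracking orientations and cusp directions carefully through the $45^\circ$ rotation and through the zigzag replacement of negative crossings in Figure~\ref{fig:frontify} is where the delicate case analysis lives; in particular the asymmetry between the $O$-above-$X$ and $O$-below-$X$ columns, and the convention that horizontal segments cross over vertical ones, must be handled with care so that each inserted canceling pair genuinely reads as a negative stabilization. Once the local picture is pinned down and seen to be orientation-consistent, assembling the global Legendrian isotopy and invoking \cite{bib:EFM} should be routine.
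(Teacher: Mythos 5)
Your overall strategy---reduce to showing $L(G)$ and $L(B(G))$ differ by Legendrian isotopy and negative de/stabilization, then invoke the result of \cite{bib:EFM}---matches the paper's, but your method of comparison does not, and the step you defer is exactly where the content of the proposition lies. You propose to compare the two fronts directly and argue that each discrepancy is ``completely local,'' reducing to insertions of canceling pairs $\sigma_i^{\pm 1}\sigma_i^{\mp 1}$. This locality claim is the weak point: when $B(G)$ cuts an $O$-below-$X$ column, the two new strands run to the top and bottom of the entire grid and are then reconnected through a closure arc that wraps around the whole diagram, crossing over or under every other strand it meets. The resulting difference between $L(G)$ and $L(B(G))$ is therefore a global rearrangement, not a local one, and it is not at all evident from the front picture alone that untangling it costs only negative stabilizations. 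Saying ``I would check that after the isotopy these cusps correspond, again up to negative de/stabilization'' is deferring precisely the assertion to be proved.

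The paper avoids this by never comparing fronts directly. It converts the braid $B(G)$ back into a larger $(n+2m)\times(n+2m)$ grid diagram $G'$ containing $G$ (Figure~\ref{fig:unbraidify}), observes that $L(B(G))$ is Legendrian isotopic to $S_-^k(L(G'))$ with $k$ counting subwords $\sigma_i^{-1}\sigma_{i+1}^{-1}$, and then eliminates each of the $m$ wrap-around ``braid'' arcs of $G'$ by grid commutations and two grid destabilizations apiece. The point is that the Legendrian effect of every Cromwell move is already classified (commutation and \NW/\SE\ destabilization are Legendrian isotopies; \SW\ destabilization is negative stabilization), so the sign bookkeeping you identify as the delicate part is done once and for all by that classification rather than by a case analysis of front moves. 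If you want to salvage your direct approach, you would essentially have to reprove that classification in the front picture, handling the global wrap-around arcs explicitly; routing the argument through the grid category is what makes the proof short.
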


\begin{figure}
\centerline{
\includegraphics[height=3.5in]{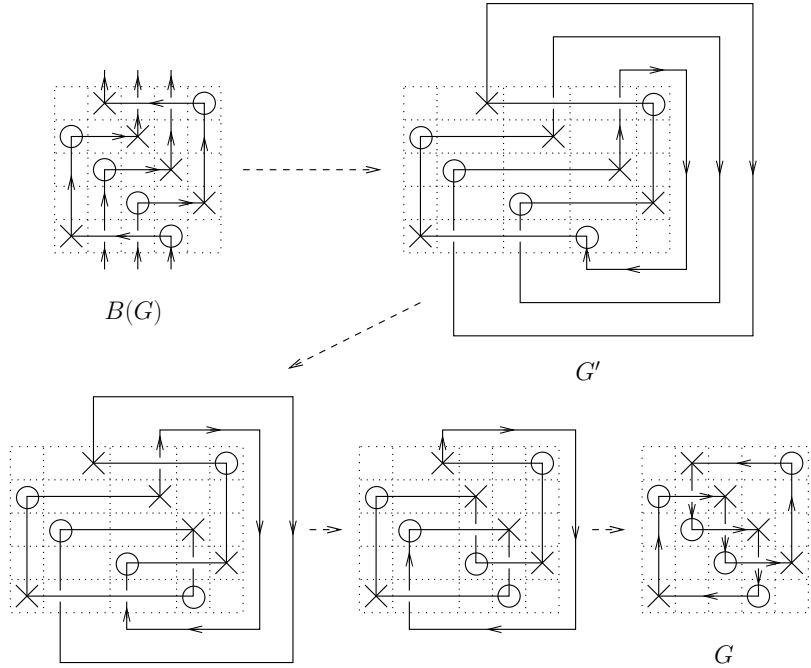}
}
\caption{
Destabilizing a grid diagram $G'$ obtained from $B(G)$ to recover
$G$. The diagram $G'$, viewed as a Legendrian knot, is itself some
number of negative destabilizations of $L(B(G))$ ($2$ in this case).
}
\label{fig:unbraidify}
\end{figure}

\begin{proof}
Let $G$ be an $n\times n$ grid diagram, and suppose $B(G)$ has $m$
strands. One can associate to $B(G)$ a natural $(n+2m)\times(n+2m)$
grid diagram $G'$ within which the original diagram $G$ appears, such
that $L(B(G))$ is Legendrian isotopic to $S_-^k(G')$ for some $k$; see
Figure~\ref{fig:unbraidify}. (More precisely, $k$ is the number of
appearances of subwords of the form $\sigma_i^{-1}\sigma_{i+1}^{-1}$
in $B(G)$.) There are $m$ ``braid'' parts of the new grid diagram,
each of which begins at an $X$, goes up 
out of $G$, curves around $G$ to the right, and ends at an $O$. We can
eliminate each of these parts in succession, via grid commutation and
two grid destabilizations, one a Legendrian isotopy and one a negative
Legendrian destabilization. The end result is $L(G)$. We conclude that
$L(B(G))$ is Legendrian isotopic to $S_-^{m+k}(L(G))$, and the proposition
follows.
\end{proof}

We remark that one can obtain another braid $B'(G)$ from a grid
diagram $G$ such that Proposition~\ref{prop:comm2} also holds. Instead of
forcing all vertical segments to point upwards, we instead force all
horizontal segments to point leftwards. This yields a braid $B'(G)$ by
reading from right to left. The two braids $B(G)$ and $B'(G)$ are
almost never identical or even conjugate, but they do represent the
same transverse knot.

To see this, define the \textit{diagonal mirror}
$G'$ of a grid diagram $G$ to be the grid diagram obtained by reflecting in
the main (upper left to lower right) diagonal and interchanging all
$X$'s and $O$'s. The diagonal mirror represents the same topological
knot as the original grid diagram, and in fact represents the same
Legendrian knot up to Legendrian isotopy. This follows from the fact
that the diagonal mirror replaces a Legendrian front in the $xz$ plane
with its reflection in the $z$ axis, corresponding to the
contactomorphism $(x,y,z) \mapsto (-x,-y,z)$ of $(\R^3,\xi_\std)$,
which is just a rotation in the $xy$ plane and preserves Legendrian
knots up to isotopy. Now $B'(G)$ is the same braid as
$B(G')$ and thus corresponds to the same transverse knot as $B(G)$.

We summarize the results from this section in the following diagram,
where $G$, $B$, $L$, and $T$ represent grid diagrams, braids,
Legendrian knots/links in $(\R^3,\xi_\std)$, and transverse
knots/links in $(\R^3,\xi_\std)$ respectively.
\[
\xymatrix@=3pc{
G \ar^{L(G)}[rr] \ar_{B(G)}[d] && L \ar^{L_+}[d] \\
B 
\ar_{\phi(T(B)) = L(B)_+}[rr] && T
}
\]
The equality at the bottom of this diagram is
Proposition~\ref{prop:comm1}, while Proposition~\ref{prop:comm2}
states that the square commutes. 

\section{Proof of Theorem~\ref{thm:main}}
\label{sec:proof}

For $a,b\geq 0$, we write the two relevant $4$-braids as
\begin{align*}
B_1(a,b) &= \sigma_3\sigma_2^{-2}\sigma_3^{2a+2}\sigma_2\sigma_3^{-1}
\sigma_1^{-1}\sigma_2\sigma_1^{2b+2} \\
B_2(a,b) &= \sigma_3\sigma_2^{-2}\sigma_3^{2a+2}\sigma_2\sigma_3^{-1}
\sigma_1^{2b+2}\sigma_2\sigma_1^{-1}.
\end{align*}
We now break the proof of Theorem~\ref{thm:main} into several parts.

\begin{proposition}
The knot given by the closure of $B_1(a,b)$ (or $B_2(a,b)$) is prime.
\end{proposition}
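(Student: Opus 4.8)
The plan is to prove the stronger, purely topological statement that the knot $K(a,b)$ obtained as the closure of $B_1(a,b)$ (equivalently $B_2(a,b)$, since the flype makes them isotopic as topological knots) is prime for all $a,b\geq 0$, by exhibiting the whole family as a twisting family over the single prime knot $10_{132}$. Comparing $B_1(a,b)$ with $B_1(0,0)$, the only difference is the insertion of $\sigma_3^{2a}$ inside $\sigma_3^{2a+2}$ and of $\sigma_1^{2b}$ inside $\sigma_1^{2b+2}$; since $\sigma_3^2$ and $\sigma_1^2$ are full twists on the strand pairs $\{3,4\}$ and $\{1,2\}$, the knot $K(a,b)$ is obtained from $10_{132}=K(0,0)$ by adding $a$ and $b$ extra full twists in two disjoint twist regions. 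Concretely, I would fix two disjoint unknotted circles $c_1,c_2$ in the complement of $K(0,0)$, where $c_i$ bounds a disk $D_i$ meeting the knot transversally in exactly two points at the corresponding twist region, so that $K(a,b)$ is the image of $K(0,0)$ under $a$ (resp.\ $b$) full twists along $D_1$ (resp.\ $D_2$), i.e.\ under $\mp 1/a$ and $\mp 1/b$ surgery on $c_1$ and $c_2$.

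The heart of the argument is then to show that this twisting cannot change primeness. Suppose for contradiction that $K(a,b)$ is composite and let $S$ be a decomposing sphere, a $2$-sphere meeting $K(a,b)$ transversally in two points and separating it into two nontrivially knotted arcs. First I would check that each twisting disk $D_i$ is \emph{essential}: its two intersection points with the knot are not cut off by a trivial subdisk, equivalently $c_i$ is neither boundary-parallel nor compressible in the knot complement. This is a local feature of the braid that can be read off from the positions of the twisted strands. I would then put $S$ in general position with respect to $D_1\cup D_2$ and remove the intersection circles and arcs by the usual innermost-disk and outermost-arc surgeries, using the essentiality of the $D_i$ and the irreducibility of the complement. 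Once $S$ is disjoint from $D_1\cup D_2$, it is unaffected by twisting, so untwisting back to $(0,0)$ carries $S$ to a decomposing sphere for $K(0,0)=10_{132}$, contradicting the primeness of $10_{132}$ (a tabulated, hence prime, knot). The tabulated identifications $K(0,1)=12^n_{120}$, $K(1,0)=12^n_{199}$, and so on, provide consistency checks on the small cases.

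The main obstacle I anticipate is precisely this cut-and-paste step: arcs of $S\cap D_i$ have endpoints on the knot, so eliminating them requires a careful $\partial$-incompressibility argument and genuine control of how the two points $D_i\cap K(a,b)$ sit relative to $S$, rather than a purely formal innermost-circle reduction. If that control proves awkward to establish by hand, I would fall back on one of two alternatives. The first is algebraic: compute the Alexander polynomial $\Delta_{K(a,b)}(t)$ as an explicit function of $a$ and $b$ from the braid and verify that it is irreducible over $\mathbb{Z}[t,t^{-1}]$; this rules out every composite $K_1\#K_2$ in which both factors have nontrivial Alexander polynomial, and the geometric argument above (or a genus estimate) is then needed only to exclude a summand with trivial Alexander polynomial. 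The second is to show that the link $10_{132}\cup c_1\cup c_2$ is hyperbolic and invoke Thurston's hyperbolic Dehn surgery theorem, so that all but finitely many $(a,b)$ yield hyperbolic, hence prime, knots, with the finitely many exceptional slopes (necessarily small $a,b$) dispatched using the tabulated examples listed after Theorem~\ref{thm:main}.
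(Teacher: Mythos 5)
Your main approach rests on the claim that adding coherent full twists along the disks $D_1,D_2$ cannot change primeness once the $D_i$ are ``essential,'' and that a decomposing sphere can be pushed off $D_1\cup D_2$ by innermost/outermost surgeries. This is the genuine gap, and it is not merely technical: the statement is false at the level of generality you invoke. The closure of the $3$-braid $\sigma_1^3\sigma_2^{2n+1}$ is $T(2,3)\#T(2,2n+1)$; for $n=0$ this is the (prime) trefoil, and a single positive full twist along the evident unknotted circle encircling the two coherently oriented strands of the $\sigma_2$ region produces the composite granny knot. That circle has linking number $2$ with the trefoil, so it is neither compressible nor boundary-parallel in the knot complement --- it passes your essentiality test --- yet twisting destroys primeness. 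Concretely, what breaks in your sweep-out is that $S\cap D_i$ consists of circles (not arcs, since $S$ is closed and $c_i$ can be pushed off $S$) which may separate or enclose the points of $K\cap D_i$, and the subdisks of $S$ they bound may contain points of $K\cap S$; such circles cannot be removed by innermost-circle surgery. Eliminating them requires exactly the extra hypotheses (e.g.\ that the two strands through $D_i$ do not cobound a band in the complement of $c_i$, or atoroidality of $K\cup c_1\cup c_2$) that separate the genuine theorems of Motegi/Eudave-Mu\~{n}oz type from the false general statement, and you would have to formulate and verify such a hypothesis for the specific $10_{132}$ configuration. The fallbacks do not close the gap either: proving irreducibility of $\Delta_{K(a,b)}$ over $\Z[t,t^{-1}]$ for a two-parameter family is open-ended and still leaves summands with trivial Alexander polynomial; and Thurston's theorem for a two-cusped manifold excludes only finitely many slopes \emph{per cusp}, so the potentially non-hyperbolic pairs $(a,b)$ form a union of horizontal and vertical lines in the parameter plane, not a finite set, in addition to requiring a certification that the augmented link is hyperbolic.

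The paper's proof is entirely different and much more elementary. Since $K(a,b)$ is a $4$-braid closure and braid index minus one is additive under connected sum, any nontrivial decomposition must contain a summand of braid index $2$, i.e.\ a $(2,2p+1)$ torus knot. Multiplicativity of the HOMFLY-PT polynomial then reduces primeness to showing that $P(T(2,2p+1))$ does not divide $P(K(a,b))$, which is verified by evaluating both polynomials at $z=0$ and $z=2i$ and deriving incompatible constraints on $p$, except when $a=b=0$ and $p=2$, where $K(0,0)$ is (the mirror of) $10_{132}$ and hence prime by the tables. If you want to salvage a polynomial-based argument, this braid-index reduction is the missing ingredient: it converts an intractable irreducibility question into a finite divisibility check against a one-parameter family of known polynomials.
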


\begin{proof}
Let $K(a,b)$ denote the closure of $B_1(a,b)$.
Since braid index minus $1$ is additive under connected sum, if
$K(a,b)$ is composite, then $K(a,b) = T(2,2p+1) \# K'$ for some integer $p\neq 0,-1$,
where $T(2,2p+1)$ is the $(2,2p+1)$ torus knot. It thus suffices to show that $P(T(2,2p+1))(x,z)$ does not divide $P(K(a,b))(x,z)$, where $P(K)(x,z)$ is the HOMFLY-PT polynomial of $K$, defined by
$P(\begin{pspicture}(-.1,0)(0.5,0.45) \pscircle(0.2,0.1){0.2} \end{pspicture}) = 1$, $x P(\begin{pspicture}(-.1,0)(0.5,0.45)   \psline{->}(0,-.1)(.4,.3) \psline[border=2pt]{->}(0,.3)(.4,-.1) \end{pspicture}) - x^{-1} P(\begin{pspicture}(-.1,0)(0.5,0.45) \psline{->}(0,.3)(.4,-.1) \psline[border=2pt]{->}(0,-.1)(.4,.3) \end{pspicture}) = z P(\begin{pspicture}(-.1,0)(0.5,0.45) \psline[linearc=.2](0,.3)(.2,.2)(.4,.3) \psline[linearc=.2](0,-.1)(.2,0)(.4,-.1) \end{pspicture})$.

It is straightforward to check by induction that
\begin{align*}
P(K(a,b))(x,0) &= x^{-2a-2b-6}\left(-2a-2+(3a+3-b)x^2+(b-a)x^4\right) \\
P(T(2,2p+1))(x,0) &= x^{-2p-2}(-p+(p+1)x^2) \\
P(K(a,b))(x,2i) &= -(-x^2)^{-a-b-3}\left(2(1+a)(1+2b) \right. \\
& \qquad \left. +(3+3a+7b+8ab)x^2+(a+3b+4ab)x^4\right) \\
P(T(2,2p+1))(x,2i) &= -(-x^2)^{-p-1}(p+(p+1)x^2),
\end{align*}
where $i = \sqrt{-1}$. If $P(T(2,2p+1))$ divides $P(K(a,b))$, then the first two equations imply that $2a+2+(a+b+1)p=p^2$, while the last two equations imply that $2a+2+4b+4ab+(a+b+1)p=p^2$. When $a,b \geq 0$, it is easy to see that these identities can hold only when $a=b=0$ and $p=2$. In this case, $K(0,0)$ is (the mirror of) $10_{132}$ and hence prime.
\end{proof}

To apply the $\widehat{\theta}$ invariant to $B_1(a,b)$ and
$B_2(a,b)$, we need grid diagrams for both braids. It is possible to
create grid diagrams directly from the braids, but to facilitate the
computation we need particular diagrams.

\begin{proposition}
Let $G_1(a,b)$ and $G_2(a,b)$ be the $(2a+2b+9)\times(2a+2b+9)$ and
$(2a+2b+10)\times(2a+2b+10)$ grid diagrams depicted in
Figures~\ref{fig:gridbd3} and~\ref{fig:gridbd4}. Then $B(G_1(a,b))$
and $B(G_2(a,b))$ represent the same transverse knots as $B_1(a,b)$
and $B_2(a,b)$, respectively.
\end{proposition}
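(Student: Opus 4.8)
The plan is to verify that the grid diagrams $G_1(a,b)$ and $G_2(a,b)$ produce, via the braid construction $B(\cdot)$ of Section~\ref{ssec:translate}, braids that are transversely equivalent to $B_1(a,b)$ and $B_2(a,b)$. By the Transverse Markov Theorem, it suffices to show that $B(G_i(a,b))$ and $B_i(a,b)$ are related by a sequence of braid conjugations and positive braid stabilizations/destabilizations. First I would apply the explicit algorithm for reading a braid off a grid diagram: connect $O$'s to $X$'s horizontally, and for each column where the $O$ sits below its $X$, cut the vertical strand and reconnect it around the diagram, recording a crossing $\sigma_i^{\pm 1}$ each time a horizontal segment passes over a vertical one. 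Reading the resulting braid from bottom to top yields an explicit braid word $B(G_i(a,b))$ directly from Figures~\ref{fig:gridbd3} and~\ref{fig:gridbd4}.

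The core of the argument is then a bookkeeping computation: I would read off the raw braid word from each grid diagram and simplify it to $B_i(a,b)$ using only the transverse-equivalence moves. Concretely, the grid diagrams will be designed so that most of their structure contributes trivial strands that can be removed by positive destabilizations (each deletes a $\sigma_n$ and drops the strand count by one), leaving a $4$-braid. The remaining word should then reduce to exactly $B_i(a,b)$ after cyclic permutation (conjugation) and cancellation of the $\sigma_i^{\mp 1}\sigma_i^{\pm 1}$ pairs that are legitimately removable here—namely positive ones via destabilization and others via braid relations/conjugation. The parameters $a$ and $b$ enter as the exponents $2a+2$ and $2b+2$ on $\sigma_3$ and $\sigma_1$, and these blocks of crossings should appear verbatim as repeated $X$--$O$ patterns in the grid, so the dependence on $(a,b)$ is uniform and the reduction is the same for all values.

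The main obstacle I expect is not any single deep idea but rather the size and parameter-dependence of the diagrams: because the grids are $(2a+2b+9)\times(2a+2b+9)$ and $(2a+2b+10)\times(2a+2b+10)$, the raw braid word has length growing with $a+b$, and one must present the destabilizations and conjugations in a way that is manifestly uniform in $a$ and $b$ rather than checking case by case. I would handle this by isolating the ``$a$-block'' and ``$b$-block'' of parallel crossings and treating each as a single unit, so that the reduction argument becomes independent of the actual values of $a$ and $b$; the finitely many crossings outside these blocks are then simplified explicitly. A secondary subtlety is orientation and crossing-sign conventions: since the paper uses the nonstandard convention that horizontal segments cross over vertical ones, I would take care that each recorded crossing has the correct sign and that every destabilization I use is genuinely a \emph{positive} stabilization in the braid sense (i.e.\ removal of a $\sigma_n^{\pm}$ at the boundary strand), which is what the Transverse Markov Theorem permits. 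Once these conventions are pinned down, the verification is a direct, if lengthy, comparison of two braid words, and the proposition follows.
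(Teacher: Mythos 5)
Your overall framework is right---the Transverse Markov Theorem reduces the problem to exhibiting a chain of conjugations and positive braid (de)stabilizations---but your described strategy has a genuine gap at the final step, and it is precisely the one nontrivial point in the paper's argument. You propose to destabilize the raw braid word down to a $4$-braid and then finish by ``cyclic permutation (conjugation) and cancellation.'' For $G_2(a,b)$ this works: the paper's $4$-braid read off the grid is conjugate to $B_2(a,b)$ after braid relations. But for $G_1(a,b)$ the $4$-braid obtained from the grid is \emph{not} conjugate to $B_1(a,b)$, so no amount of cyclic permutation and word-level cancellation will close the argument. The paper bridges this with \emph{exchange moves} ($b_1\sigma_1 b_2\sigma_1^{-1}b_3 \leftrightarrow b_1\sigma_1^{-1}b_2\sigma_1 b_3$ with $b_1,b_2,b_3$ in the subgroup generated by $\sigma_2,\sigma_3$, and the analogous $\sigma_3$ exchange), which by Birman--Wrinkle are compositions of conjugations with one positive stabilization \emph{up} to five strands followed by one positive destabilization back down. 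This detour through a larger braid group is not something your ``destabilize, then cancel'' plan would ever produce, and without identifying it you cannot complete the $G_1$ case.

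Two smaller points. First, the paper avoids working with $B(G_i(a,b))$ directly because those braids have a number of strands growing with $a+b$; instead it uses the variant $B'(G)$ (horizontal segments forced leftward), already shown in Section~\ref{ssec:translate} to represent the same transverse knot, which yields $4$-braids immediately and makes the computation uniform in $(a,b)$ without any block-by-block destabilization argument. Second, be careful with your parenthetical ``removal of a $\sigma_n^{\pm}$ at the boundary strand'': removing a $\sigma_n^{-1}$ is a \emph{negative} destabilization, which changes the transverse type and is not permitted by the Transverse Markov Theorem; only the deletion of a positive $\sigma_n$ is allowed. (Cancelling an adjacent pair $\sigma_i\sigma_i^{-1}$ inside a word is merely a relation in the braid group and is always harmless, but that is a different operation from destabilization.)
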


\begin{figure}[htbp]
	\centering
\begin{pspicture}(0,0)(6.5,7.2)
\psset{xunit=0.5cm,yunit=0.5cm, runit=0.5}
\psgrid[subgriddiv=1,gridlabels=0,linewidth=0.05mm](0,0)(13,13)
\psframe[linewidth=0.5mm](8,8)(13,13)
\psframe[linewidth=0.5mm, linestyle=dashed](2,4)(6,8)
\psline(0.75,4.25)(0.25,4.75)\psline(0.25,4.25)(0.75,4.75)
\psline(1.75,3.25)(1.25,3.75)\psline(1.25,3.25)(1.75,3.75)
\psline(2.75,5.25)(2.25,5.75)\psline(2.25,5.25)(2.75,5.75)
\psline(3.25,6.25)(3.75,6.75)\psline(3.75,6.25)(3.25,6.75)
\psline(4.25,7.25)(4.75,7.75)\psline(4.75,7.25)(4.25,7.75)
\psline(5.25,1.25)(5.75,1.75)\psline(5.75,1.25)(5.25,1.75)
\psline(6.25,0.25)(6.75,0.75)\psline(6.75,0.25)(6.25,0.75)
\psline(7.25,2.25)(7.75,2.75)\psline(7.75,2.25)(7.25,2.75)
\psline(8.25,8.25)(8.75,8.75)\psline(8.75,8.25)(8.25,8.75)
\psline(9.25,9.25)(9.75,9.75)\psline(9.75,9.25)(9.25,9.75)
\psline(10.25,10.25)(10.75,10.75)\psline(10.75,10.25)(10.25,10.75)
\psline(11.25,11.25)(11.75,11.75)\psline(11.75,11.25)(11.25,11.75)
\psline(12.25,12.25)(12.75,12.75)\psline(12.75,12.25)(12.25,12.75)
\pscircle(0.5,0.5){0.3}
\pscircle(1.5,11.5){0.3}
\pscircle(2.5,2.5){0.3}
\pscircle(3.5,4.5){0.3}
\pscircle(4.5,5.5){0.3}
\pscircle(5.5,6.5){0.3}
\pscircle(6.5,3.5){0.3}
\pscircle(7.5,12.5){0.3}
\pscircle(8.5,1.5){0.3}
\pscircle(9.5,7.5){0.3}
\pscircle(10.5,8.5){0.3}
\pscircle(11.5,9.5){0.3}
\pscircle(12.5,10.5){0.3}
\psdot*[dotsize=5pt](0,0)
\psdot*[dotsize=5pt](1,5)
\psdot*[dotsize=5pt](2,4)
\psdot*[dotsize=5pt](3,6)
\psdot*[dotsize=5pt](4,7)
\psdot*[dotsize=5pt](5,8)
\psdot*[dotsize=5pt](6,2)
\psdot*[dotsize=5pt](7,1)
\psdot*[dotsize=5pt](8,3)
\psdot*[dotsize=5pt](9,9)
\psdot*[dotsize=5pt](10,10)
\psdot*[dotsize=5pt](11,11)
\psdot*[dotsize=5pt](12,12)
\uput[u](10.5,13){$\overbrace{\ \ \ \ \ \ \ \ \ \ \ \ \ \ \ \ \ \ \ }^{2a+3}$}
\uput[u](4,13){$\overbrace{\ \ \ \ \ \ \ \ \ \ \ \ \ }^{2b+2}$}
\end{pspicture}
	\caption{The grid diagram $G_1(a,b)$ for $B_1(a,b)$, along
          with the state $x^+$. The solid
        box denotes a pattern consisting of $X$'s on the diagonal and
        $O$'s on the second subdiagonal; the dashed box has $X$'s on
        the first superdiagonal and $O$'s on the first subdiagonal.}
	\label{fig:gridbd3}
\end{figure}
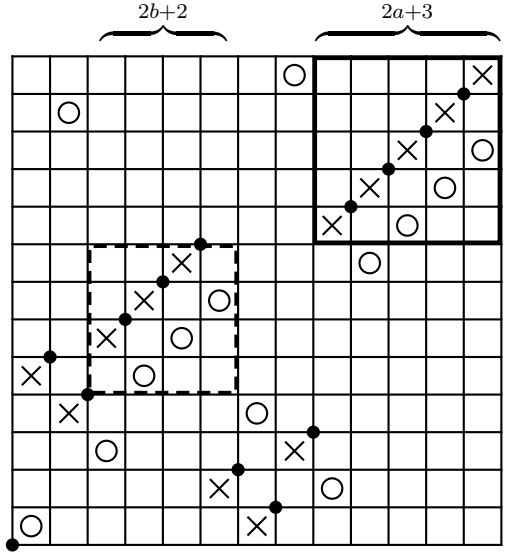

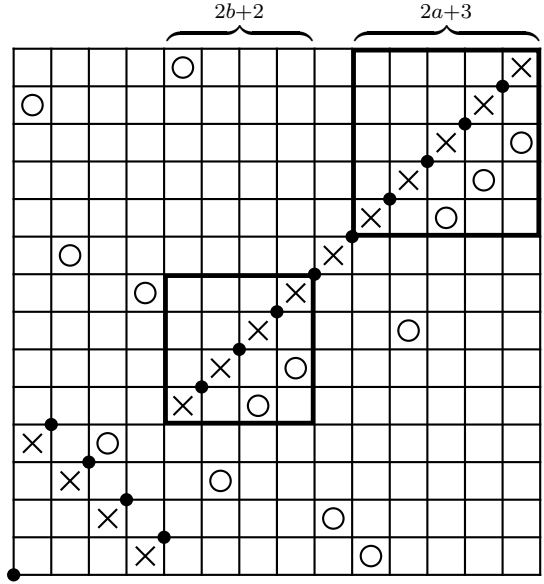
\begin{figure}
	\centering
\begin{pspicture}(0,0)(7,7.7)
\psset{xunit=0.5cm,yunit=0.5cm, runit=0.5}
\psgrid[subgriddiv=1,gridlabels=0,linewidth=0.05mm](0,0)(14,14)
\psframe[linewidth=0.5mm](9,9)(14,14)
\psframe[linewidth=0.5mm](4,4)(8,8)
\psline(0.75,3.25)(0.25,3.75)\psline(0.25,3.25)(0.75,3.75)
\psline(1.75,2.25)(1.25,2.75)\psline(1.25,2.25)(1.75,2.75)
\psline(2.75,1.25)(2.25,1.75)\psline(2.25,1.25)(2.75,1.75)
\psline(3.25,0.25)(3.75,0.75)\psline(3.75,0.25)(3.25,0.75)
\psline(4.25,4.25)(4.75,4.75)\psline(4.75,4.25)(4.25,4.75)
\psline(5.25,5.25)(5.75,5.75)\psline(5.75,5.25)(5.25,5.75)
\psline(6.25,6.25)(6.75,6.75)\psline(6.75,6.25)(6.25,6.75)
\psline(7.25,7.25)(7.75,7.75)\psline(7.75,7.25)(7.25,7.75)
\psline(8.25,8.25)(8.75,8.75)\psline(8.75,8.25)(8.25,8.75)
\psline(9.25,9.25)(9.75,9.75)\psline(9.75,9.25)(9.25,9.75)
\psline(10.25,10.25)(10.75,10.75)\psline(10.75,10.25)(10.25,10.75)
\psline(11.25,11.25)(11.75,11.75)\psline(11.75,11.25)(11.25,11.75)
\psline(12.25,12.25)(12.75,12.75)\psline(12.75,12.25)(12.25,12.75)
\psline(13.25,13.25)(13.75,13.75)\psline(13.75,13.25)(13.25,13.75)
\pscircle(0.5,12.5){0.3}
\pscircle(1.5,8.5){0.3}
\pscircle(2.5,3.5){0.3}
\pscircle(3.5,7.5){0.3}
\pscircle(4.5,13.5){0.3}
\pscircle(5.5,2.5){0.3}
\pscircle(6.5,4.5){0.3}
\pscircle(7.5,5.5){0.3}
\pscircle(8.5,1.5){0.3}
\pscircle(9.5,0.5){0.3}
\pscircle(10.5,6.5){0.3}
\pscircle(11.5,9.5){0.3}
\pscircle(12.5,10.5){0.3}
\pscircle(13.5,11.5){0.3}
\psdot*[dotsize=5pt](0,0)
\psdot*[dotsize=5pt](1,4)
\psdot*[dotsize=5pt](2,3)
\psdot*[dotsize=5pt](3,2)
\psdot*[dotsize=5pt](4,1)
\psdot*[dotsize=5pt](5,5)
\psdot*[dotsize=5pt](6,6)
\psdot*[dotsize=5pt](7,7)
\psdot*[dotsize=5pt](8,8)
\psdot*[dotsize=5pt](9,9)
\psdot*[dotsize=5pt](10,10)
\psdot*[dotsize=5pt](11,11)
\psdot*[dotsize=5pt](12,12)
\psdot*[dotsize=5pt](13,13)
\uput[u](11.5,13.8){$\overbrace{\ \ \ \ \ \ \ \ \ \ \ \ \ \ \ \ \ \ \ }^{2a+3}$}
\uput[u](6,13.8){$\overbrace{\ \ \ \ \ \ \ \ \ \ \ \ \ \ \ }^{2b+2}$}
\end{pspicture}
	\caption{The grid diagram $G_2(a,b)$ for $B_2(a,b)$.}
	\label{fig:gridbd4}
	\end{figure}

\begin{proof}
The braids $B(G_1(a,b))$ and $B(G_2(a,b))$ have a large number of
strands. It is easier to work with $B'(G_1(a,b))$ and $B'(G_2(a,b))$,
where $B'$ is the braid constructed by stipulating that all horizontal
segments point leftward; see Section~\ref{ssec:translate}. It was
shown in Section~\ref{ssec:translate} that $B(G)$ and $B'(G)$
represent the same transverse knot for any grid diagram $G$.

We readily calculate from the grid diagrams that $B'(G_1(a,b))$ and
$B'(G_2(a,b))$ are $4$-braids given by
\begin{align*}
B'(G_1(a,b)) &= \sigma_3^{2a+3} \sigma_2 \sigma_3^{-1} \sigma_1^{-2}
\sigma_2^{2b+1} \sigma_1 \sigma_2^{-1} \sigma_1 \\
B'(G_2(a,b)) &= \sigma_3^{2a+2} \sigma_2 \sigma_1 \sigma_3 \sigma_2 \sigma_1 \sigma_3 \sigma_2 \sigma_3^{2b+1}
\sigma_1^{-1}\sigma_2^{-1}\sigma_1^{-2}\sigma_2^{-1}\sigma_1^{-1}\sigma_2^{-1}.
\end{align*}
From relations in the braid group, we find that
\[
B'(G_2(a,b)) = \sigma_3^{2a+2} \sigma_2 \sigma_3^{-1} \sigma_1^{2b+2}
\sigma_2 \sigma_1^{-1} \sigma_3 \sigma_2^{-2}
\]
and thus $B'(G_2(a,b))$ is conjugate to $B_2(a,b)$.

The braids $B_1(a,b)$ and $B'(G_1(a,b))$ are not conjugate, but are
related by conjugation and exchange moves. For our purposes, we recognize
two exchange moves on $4$-braids, related by conjugation:
\begin{itemize}
\item
$\sigma_1$ exchange: $b_1\sigma_1b_2\sigma_1^{-1}b_3 \leftrightarrow
b_1\sigma_1^{-1}b_2\sigma_1b_3$, where $b_1,b_2,b_3$ are braids in the
subgroup generated by $\sigma_2,\sigma_3$;
\item
$\sigma_3$ exchange: $b_1\sigma_3b_2\sigma_3^{-1}b_3 \leftrightarrow
b_1\sigma_3^{-1}b_2\sigma_3b_3$, where $b_1,b_2,b_3$ are braids in the
subgroup generated by $\sigma_1,\sigma_2$.
\end{itemize}
Since an exchange move is a composition of conjugations, one positive
braid stabilization, and one positive destabilization, it does not
change the transverse type of the braid \cite{bib:BW}. Now we have
\begin{align*}
B'(G_1(a,b)) &\stackrel{\mathrm{conj}\,\sigma_1}{\longrightarrow}
\sigma_2^{-1} \sigma_3^{-1} \sigma_2^{2a+3} \sigma_1 \sigma_2^{-1}
\sigma_3 \sigma_2 \sigma_1^{-1} \sigma_2^{2b+1} \sigma_1 \sigma_2^{-1}
\\
&\stackrel{\mathrm{exch}\,\sigma_3}{\longrightarrow}
\sigma_2^{-1} \sigma_3 \sigma_2^{2a+3} \sigma_1
\sigma_2^{-1} \sigma_3^{-1} \sigma_2 \sigma_1^{-1} \sigma_2^{2b+1}
\sigma_1 \sigma_2^{-1} \\
& \qquad = \sigma_2^{-1} \sigma_3 \sigma_2^{2a+3} \sigma_3
\sigma_2^{-1} \sigma_1^{-1} \sigma_2 \sigma_3^{-1} \sigma_2^{2b+1}
\sigma_1 \sigma_2^{-1} \\
&\stackrel{\mathrm{exch}\,\sigma_1}{\longrightarrow} \sigma_2^{-1}
\sigma_3 \sigma_2^{2a+3} \sigma_3
\sigma_2^{-1} \sigma_1 \sigma_2 \sigma_3^{-1} \sigma_2^{2b+1}
\sigma_1^{-1} \sigma_2^{-1} \\
&\stackrel{\mathrm{conj}\,\sigma_3}{\longrightarrow} B_1(a,b),
\end{align*}
where ``conj $\sigma_k$'' is conjugation by $\sigma_k$, $B \mapsto
\sigma_k B \sigma_k^{-1}$.
\end{proof}

We now use $\widehat{\theta}$ to show that $G_1(a,b)$ and $G_2(a,b)$
are of different transverse types; more precisely, if we define
transverse knots
\[
T_i(a,b) = L(G_i(a,b))_+ =
\phi(T(B_i(a,b)))
\]
in $(\R^3,\xi_\std)$ for $i=1,2$, then $T_1(a,b)$ and $T_2(a,b)$
are not transversely isotopic. This will complete the proof of
Theorem~\ref{thm:main}.

Recall from \cite{bib:OST} that if $T$ is a
transverse knot, then $\widehat{\theta}(T)$ is an element of the knot
Floer homology $\HFKhat(m(T))$, where $m(T)$ is the topological mirror
of $T$. If $T = L(G)_+$ for a grid diagram $G$, then $\HFKhat(m(T))$
can be combinatorially computed from $G$ as in \cite{bib:MOS}. It is
easier to consider a variant $\HFKtilde(m(T))$ of $\HFKhat(m(T))$, in
which there is a corresponding element $\widetilde{\theta}(T)$
($=j_*(\widehat{\theta})$ in \cite{bib:NOT}); then $\widetilde{\theta}(T)
= 0$ if and only if $\widehat{\theta}(T) = 0$ \cite[section
4]{bib:NOT}.

We assume some familiarity with the combinatorial definition of
$\HFKtilde$ over $\Z/2$ from \cite{bib:MOS} (or \cite{bib:NOT,bib:OST}).
If $T = L(G)_+$ for an $n\times n$ grid diagram $G$, then
the complex $\CFKtilde(m(T))$ is generated by $n!$ states labeled by
permutations of $\{1,\dots,n\}$. A state $(\pi(1),\dots,\pi(n))$ can
be depicted in the grid as a set of $n$ points $\{(i,\pi(i))\}$, where
$(i,j)$ is the intersection of vertical line $i$ and horizontal line
$j$; here the vertical (resp.\ horizontal) lines are numbered left to
right (resp.\ bottom to top) starting with $1$. The differential
$\partial$ on $\CFKtilde(m(T))$ is represented pictorially by
\[
\partial \left( \ \begin{pspicture}(0,-.1)(0.5,0.25)
    \psset{xunit=0.5cm,yunit=0.5cm, runit=0.5}
\psframe(0,-.5)(1,.5) \psdot*[dotsize=3pt](0,-.5)
\psdot*[dotsize=3pt](1,.5) \end{pspicture} \ \right) =
\sum \quad \begin{pspicture}(0,-.1)(0.5,0.25)
  \psset{xunit=0.5cm,yunit=0.5cm, runit=0.5}
\psframe(0,-.5)(1,.5) \psdot*[dotsize=3pt](1,-.5)
\psdot*[dotsize=3pt](0,.5) \end{pspicture} \quad ,
\]
where the sum is over all rectangles not containing any $X$'s, $O$'s,
or other points in the state. If $\partial(y)$ contains $x$ as a term,
then we write $x \leftarrow y$ and $y \rightarrow x$. The transverse
invariant $\widetilde{\theta}$ is the image in $\HFKtilde(m(T))$ of
the state $x^+$ given by the upper right corners of the $X$'s.

\begin{proposition}
We have $\widetilde{\theta}(T_1(a,b)) = 0$ and hence
$\widehat{\theta}(T_1(a,b)) = 0$.
\label{prop:t1}
\end{proposition}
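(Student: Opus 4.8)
The plan is to show directly that the generator $x^+$ is a boundary in the complex $\CFKtilde(m(T_1(a,b)))$; since $\widetilde{\theta}(T_1(a,b))$ is by definition the homology class of $x^+$, and since $x^+$ is automatically a cycle as the transverse invariant, exhibiting a chain $y$ with $\partial y = x^+$ proves $\widetilde{\theta}(T_1(a,b)) = 0$, and the equivalence $\widetilde{\theta}=0 \iff \widehat{\theta}=0$ quoted from \cite[section 4]{bib:NOT} then gives $\widehat{\theta}(T_1(a,b)) = 0$. First I would read off $x^+$ from Figure~\ref{fig:gridbd3} as the set of upper-right corners of the $X$'s (the black dots). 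Along the two diagonal patterns (the solid and dashed boxes), the $X$'s and $O$'s sit on adjacent diagonals, so the corresponding points of $x^+$ are \emph{pinned}: any rectangle having one of these points as a corner immediately encloses an adjacent $X$ or $O$ and hence cannot contribute to the differential. The only combinatorial freedom lives in the fixed ``core'' region outside the boxes, whose configuration does not depend on $a$ or $b$.

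Next I would locate, inside this core region, a single generator $y$ (or a short sum $y_1+\cdots+y_k$) obtained from $x^+$ by replacing two of its points with the opposite corners of an empty rectangle, so that the rectangle rule $y \to x^+$ applies. This is essentially the rectangle that kills $x^+$ in the $a=b=0$ case, where the closure is $10_{132}$ and the vanishing was verified by computer in \cite{bib:NOT}; the aim is to show that increasing $a$ or $b$ merely lengthens the inert staircases and leaves this core rectangle unchanged.

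The verification then splits into a positive and a negative part. On the positive side I must confirm that the chosen empty rectangle genuinely connects $y$ to $x^+$ (no $X$, $O$, or other state point inside it). On the negative side I must check that $\partial y = x^+$ \emph{exactly}, i.e.\ that every other rectangle with a corner at a point of $y$ is obstructed; because the staircase points are pinned, the only candidate rectangles lie in the bounded core, so this is a finite check independent of $a,b$. If a single generator does not suffice, I would instead use a short telescoping chain and verify that the unwanted differential terms cancel in pairs over $\Z/2$.

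The hard part will be the uniformity in $a$ and $b$. As the grid grows the two diagonal patterns lengthen, and one must rule out that any \emph{new} empty rectangle opens up --- either running across a box or joining a core point to a box point --- that would destroy the pinning of the staircases or introduce a spurious term into $\partial y$. Establishing that the box diagonals remain rectangle-rigid for all $a,b\ge 0$, so that the computation collapses to the fixed configuration of the $10_{132}$ case, is the crux of the argument.
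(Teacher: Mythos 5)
Your strategy is exactly the paper's: exhibit an explicit chain in $\CFKtilde$ whose boundary is $x^+$, uniformly in $a$ and $b$, then invoke the equivalence $\widetilde{\theta}=0 \iff \widehat{\theta}=0$ from [NOT]. You also correctly anticipate that a single generator may not suffice and that a short telescoping sum over $\Z/2$ is needed; the paper uses precisely such a sum of three states $y_1,y_2,y_3$, obtained from $x^+=(1,6,5,(\ldots)_1,e,3,2,4,(\ldots)_2)$ (with $e=2b+7$) by altering only the entries outside the two staircase blocks $(\ldots)_1,(\ldots)_2$, and verifies $\partial(y_1+y_2+y_3)=x^+$ directly.

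That said, as written your proposal is a plan rather than a proof: the primitive chain is never produced, and producing it and checking its differential is the entire content of the argument. Two of your supporting heuristics would also need repair before the verification goes through. First, the ``pinning'' claim is too strong: a rectangle with a corner at a staircase point need not enclose the adjacent $X$ or $O$ (it can be one unit wide and extend in the other direction), so the inertness of the staircase coordinates must be checked against the actual candidate rectangles rather than deduced a priori. Second, the contributing rectangles are not all confined to a bounded core of size independent of $a,b$: in the paper's computation one of the rectangles for $y_1$ wraps vertically around the torus through a column spanning rows whose number grows with the parameters, and its emptiness depends on where the staircase markers sit. The $a,b$-uniformity does hold, but it is established by writing the states so that their entries in positions $(\ldots)_1$ and $(\ldots)_2$ coincide with those of $x^+$ and then checking each listed rectangle explicitly --- not by a general rigidity principle. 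The concrete gap, then, is that you still need to write down $y_1,y_2,y_3$ and carry out that finite verification.
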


\begin{figure}
	\centering
	\subfigure[$y_1$]{\begin{pspicture}(0,0)(3.25,3.4)
\psset{xunit=0.25cm,yunit=0.25cm, runit=0.25}
\psframe[linewidth=0.5mm](8,8)(13,13) \psframe[linewidth=0.5mm, linestyle=dashed](2,4)(6,8)
\psline(0.75,4.25)(0.25,4.75)\psline(0.25,4.25)(0.75,4.75)\psline(1.75,3.25)(1.25,3.75)\psline(1.25,3.25)(1.75,3.75)
\psline(2.75,5.25)(2.25,5.75)\psline(2.25,5.25)(2.75,5.75)\psline(3.25,6.25)(3.75,6.75)\psline(3.75,6.25)(3.25,6.75)
\psline(4.25,7.25)(4.75,7.75)\psline(4.75,7.25)(4.25,7.75)\psline(5.25,1.25)(5.75,1.75)\psline(5.75,1.25)(5.25,1.75)
\psline(6.25,0.25)(6.75,0.75)\psline(6.75,0.25)(6.25,0.75)\psline(7.25,2.25)(7.75,2.75)\psline(7.75,2.25)(7.25,2.75)
\psline(8.25,8.25)(8.75,8.75)\psline(8.75,8.25)(8.25,8.75)\psline(9.25,9.25)(9.75,9.75)\psline(9.75,9.25)(9.25,9.75)
\psline(10.25,10.25)(10.75,10.75)\psline(10.75,10.25)(10.25,10.75)\psline(11.25,11.25)(11.75,11.75)\psline(11.75,11.25)(11.25,11.75)\psline(12.25,12.25)(12.75,12.75)\psline(12.75,12.25)(12.25,12.75)
\pscircle(0.5,0.5){0.3}\pscircle(1.5,11.5){0.3}\pscircle(2.5,2.5){0.3}\pscircle(3.5,4.5){0.3}\pscircle(4.5,5.5){0.3}
\pscircle(5.5,6.5){0.3}\pscircle(6.5,3.5){0.3}\pscircle(7.5,12.5){0.3}\pscircle(8.5,1.5){0.3}\pscircle(9.5,7.5){0.3}
\pscircle(10.5,8.5){0.3}\pscircle(11.5,9.5){0.3}\pscircle(12.5,10.5){0.3}
\psframe[fillstyle=solid, fillcolor=lightgray](6,1)(7,2)
\psframe[fillstyle=solid, fillcolor=lightgray](5,8)(6,13) \psframe[fillstyle=solid, fillcolor=lightgray](5,0)(6,1)
\psdots*[dotsize=.6](0,0)(1,5)(2,4)(5,8)(6,1)(7,2)(8,3)
\psdots*[dotsize=.6](3,6)(4,7)(9,9)(10,10)(11,11)(12,12) \psgrid[subgriddiv=1,gridlabels=0,linewidth=0.05mm](0,0)(13,13)
\end{pspicture} }
  \subfigure[$y_2$]{\begin{pspicture}(0,0)(3.25,3.4) \psset{xunit=0.25cm,yunit=0.25cm, runit=0.25}
\psframe[linewidth=0.5mm](8,8)(13,13) \psframe[linewidth=0.5mm, linestyle=dashed](2,4)(6,8)
\psline(0.75,4.25)(0.25,4.75)\psline(0.25,4.25)(0.75,4.75)\psline(1.75,3.25)(1.25,3.75)\psline(1.25,3.25)(1.75,3.75)
\psline(2.75,5.25)(2.25,5.75)\psline(2.25,5.25)(2.75,5.75)\psline(3.25,6.25)(3.75,6.75)\psline(3.75,6.25)(3.25,6.75)
\psline(4.25,7.25)(4.75,7.75)\psline(4.75,7.25)(4.25,7.75)\psline(5.25,1.25)(5.75,1.75)\psline(5.75,1.25)(5.25,1.75)
\psline(6.25,0.25)(6.75,0.75)\psline(6.75,0.25)(6.25,0.75)\psline(7.25,2.25)(7.75,2.75)\psline(7.75,2.25)(7.25,2.75)
\psline(8.25,8.25)(8.75,8.75)\psline(8.75,8.25)(8.25,8.75)\psline(9.25,9.25)(9.75,9.75)\psline(9.75,9.25)(9.25,9.75)
\psline(10.25,10.25)(10.75,10.75)\psline(10.75,10.25)(10.25,10.75)\psline(11.25,11.25)(11.75,11.75)\psline(11.75,11.25)(11.25,11.75)\psline(12.25,12.25)(12.75,12.75)\psline(12.75,12.25)(12.25,12.75)
\pscircle(0.5,0.5){0.3}\pscircle(1.5,11.5){0.3}\pscircle(2.5,2.5){0.3}\pscircle(3.5,4.5){0.3}\pscircle(4.5,5.5){0.3}
\pscircle(5.5,6.5){0.3}\pscircle(6.5,3.5){0.3}\pscircle(7.5,12.5){0.3}\pscircle(8.5,1.5){0.3}\pscircle(9.5,7.5){0.3}
\pscircle(10.5,8.5){0.3}\pscircle(11.5,9.5){0.3}\pscircle(12.5,10.5){0.3}
\psframe[fillstyle=solid, fillcolor=lightgray](1,4)(2,5)
\psframe[fillstyle=solid, fillcolor=lightgray](8,3)(13,4) \psframe[fillstyle=solid, fillcolor=lightgray](0,3)(1,4) \psdots*[dotsize=.6](0,0)(1,4)(2,5)(5,1)(6,8)(7,2)(8,3)
\psdots*[dotsize=.6](3,6)(4,7)(9,9)(10,10)(11,11)(12,12) \psgrid[subgriddiv=1,gridlabels=0,linewidth=0.05mm](0,0)(13,13)
\end{pspicture} }
	\subfigure[$y_3$]{\begin{pspicture}(0,0)(3.25,3.4)
\psset{xunit=0.25cm,yunit=0.25cm, runit=0.25}
\psframe[linewidth=0.5mm](8,8)(13,13) \psframe[linewidth=0.5mm, linestyle=dashed](2,4)(6,8)
\psline(0.75,4.25)(0.25,4.75)\psline(0.25,4.25)(0.75,4.75)\psline(1.75,3.25)(1.25,3.75)\psline(1.25,3.25)(1.75,3.75)
\psline(2.75,5.25)(2.25,5.75)\psline(2.25,5.25)(2.75,5.75)\psline(3.25,6.25)(3.75,6.75)\psline(3.75,6.25)(3.25,6.75)
\psline(4.25,7.25)(4.75,7.75)\psline(4.75,7.25)(4.25,7.75)\psline(5.25,1.25)(5.75,1.75)\psline(5.75,1.25)(5.25,1.75)
\psline(6.25,0.25)(6.75,0.75)\psline(6.75,0.25)(6.25,0.75)\psline(7.25,2.25)(7.75,2.75)\psline(7.75,2.25)(7.25,2.75)
\psline(8.25,8.25)(8.75,8.75)\psline(8.75,8.25)(8.25,8.75)\psline(9.25,9.25)(9.75,9.75)\psline(9.75,9.25)(9.25,9.75)
\psline(10.25,10.25)(10.75,10.75)\psline(10.75,10.25)(10.25,10.75)\psline(11.25,11.25)(11.75,11.75)\psline(11.75,11.25)(11.25,11.75)\psline(12.25,12.25)(12.75,12.75)\psline(12.75,12.25)(12.25,12.75)
\pscircle(0.5,0.5){0.3}\pscircle(1.5,11.5){0.3}\pscircle(2.5,2.5){0.3}\pscircle(3.5,4.5){0.3}\pscircle(4.5,5.5){0.3}
\pscircle(5.5,6.5){0.3}\pscircle(6.5,3.5){0.3}\pscircle(7.5,12.5){0.3}\pscircle(8.5,1.5){0.3}\pscircle(9.5,7.5){0.3}
\pscircle(10.5,8.5){0.3}\pscircle(11.5,9.5){0.3}\pscircle(12.5,10.5){0.3}
\psframe[fillstyle=solid, fillcolor=lightgray](6,4)(8,8) \psdots[dotsize=.6](0,0)(1,3)(2,5)(5,1)(6,4)(7,2)(8,8)
\psdots[dotsize=.6](3,6)(4,7)(9,9)(10,10)(11,11)(12,12)  \psgrid[subgriddiv=1,gridlabels=0,linewidth=0.05mm](0,0)(13,13)
\end{pspicture} }
	\caption{Three grid states of $G_1(a,b)$.
The shaded boxes indicate terms contributing to the differentials of
these states.}
	\label{fig:threestate}
\end{figure}
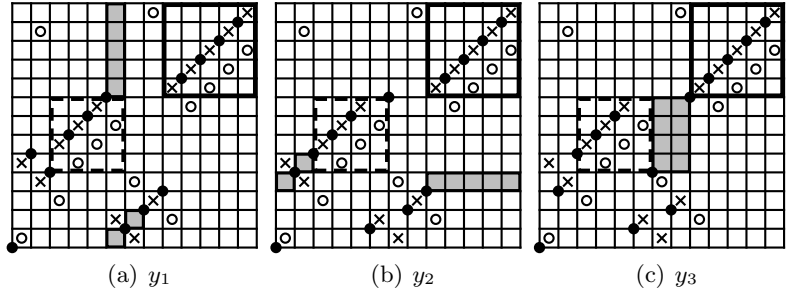

\begin{proof}
Write $e=2b+7$ and $f=2a+2b+9$; then the state $x^+$ is the
permutation $(1,6,5,7,8,\ldots,e,3,2,4,e+1,\ldots,f) =
(1,6,5,(\ldots)_1,e,3,2,4,(\ldots)_2)$, where $(\ldots)_1 =
7,\ldots,e-1$ and $(\ldots)_2 = e+1,\ldots,f$.
Define three further states
\begin{align*}
y_1 &=  (1,6,5,(\ldots)_1,e,2,3,4,(\ldots)_2)	\\
y_2 &=  (1,5,6,(\ldots)_1,2,e,3,4,(\ldots)_2)  \\
y_3 &=  (1,4,6,(\ldots)_1,2,5,3,e,(\ldots)_2).
\end{align*}
It is then easy to check (Figure~\ref{fig:threestate}) that
\begin{align*}
\partial(y_1) &= \ (1,6,5,(\ldots)_1,e,3,2,4,(\ldots)_2) +
(1,6,5,(\ldots)_1,2,e,3,4,(\ldots)_2) \\
\partial(y_2) &= \ (1,6,5,(\ldots)_1,2,e,3,4,(\ldots)_2) +
(1,4,6,(\ldots)_1,2,e,3,5,(\ldots)_2) \\
\partial(y_3) &= \ (1,4,6,(\ldots)_1,2,e,3,5,(\ldots)_2),
\end{align*}
and so
\[
\partial(y_1 + y_2 + y_3) = 	(1,6,5,(\ldots)_1,e,3,2,4,(\ldots)_2) = x^+.
\]
 Thus $x^+$ is null-homologous and $\widetilde{\theta}(T_1(a,b)) = 0$.
\end{proof}

\begin{proposition}
We have $\widetilde{\theta}(T_2(a,b)) \neq 0$ and hence
$\widehat{\theta}(T_2(a,b)) \neq 0$.
\label{prop:t2}
\end{proposition}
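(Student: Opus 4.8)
The plan is to certify that $x^+$ is \emph{not} a boundary by producing an explicit cocycle in the dual complex that pairs nontrivially with it. Working over $\Z/2$, suppose we can find a finite set $W$ of generators of $\CFKtilde(m(T_2(a,b)))$ with $x^+\in W$ such that, for every generator $y$, the number of elements of $W$ occurring in $\partial(y)$ is even. Then the functional $\phi=\sum_{w\in W}w^*$ satisfies $\phi\circ\partial=0$, while $\phi(x^+)=1$; if we had $x^+=\partial(u)$ we would get $1=\phi(x^+)=\phi(\partial u)=0$, a contradiction. So exhibiting such a $W$ proves $[x^+]\neq 0$, hence $\widetilde{\theta}(T_2(a,b))\neq 0$, and therefore $\widehat{\theta}(T_2(a,b))\neq 0$ by \cite[section 4]{bib:NOT}. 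The advantage of this route over the bounding-chain computation of Proposition~\ref{prop:t1} is that it gives a positive certificate of nonvanishing rather than requiring us to rule out all bounding chains directly.

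First I would cut the problem down to a bounded size using the bigrading. The differential preserves the Alexander grading and lowers the Maslov grading by one, so $x^+$ can only receive a differential from a generator $y$ with $A(y)=A(x^+)$ and $M(y)=M(x^+)+1$, and only generators sharing the bigrading of $x^+$ can belong to $W$. The essential observation is that the two solid boxes in Figure~\ref{fig:gridbd4} — blocks of $X$'s on the diagonal with $O$'s on the adjacent subdiagonal — rigidly constrain which permutations can appear in these extremal gradings: a state that does not run straight along the diagonal through a box sits strictly below the top Alexander grading, since any rectangle realizing the deviation must swallow one of the box markings. I would make this precise by a direct grading computation, showing that each box contributes a fixed amount to $A$ and to $M$ and that any departure from the diagonal inside a box strictly lowers $A$. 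This localizes every relevant generator to the portion of the diagram lying outside the two boxes, whose size is bounded independently of $a$ and $b$.

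With the relevant generators so localized, the verification becomes a finite, $(a,b)$-independent combinatorial check: enumerate the generators in the bigrading of $x^+$ and in the bigrading one Maslov unit above, list the few empty rectangles among them, and either read off directly that $x^+$ lies in the image of no differential from the grading above — in which case $W=\{x^+\}$ already works — or assemble the balancing set $W$ by hand. I expect the main obstacle to be exactly the localization step: one must rule out, uniformly in the two growing twist regions, the a priori infinitely many bounding chains that could thread through the boxes, by showing that crossing a twist region always costs Alexander grading. Once the problem is genuinely reduced to the fixed-size core around $x^+$ — in pointed contrast with the chain $y_1+y_2+y_3$ that bounds $x^+$ for $T_1(a,b)$ in Proposition~\ref{prop:t1} — the remaining bookkeeping is routine.
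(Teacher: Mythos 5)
Your dual framing is sound in principle: over $\Z/2$, exhibiting a set $W$ of generators containing $x^+$ such that every $\partial(y)$ meets $W$ in an even number of terms does certify $[x^+]\neq 0$, and this is exactly dual to what the paper does (the paper shows $x^+$ is not in the image of the restricted differential $D\colon A\to B$ by row-reducing the matrix of $D$; a functional vanishing on $\operatorname{im}D$ but not on $x^+$, extended by zero, is your $\phi$). But the proposal stops at the level of a plan and never produces $W$, so the essential content of the proof is missing. Worse, the one concrete shortcut you hold out hope for --- that $x^+$ receives no differentials so that $W=\{x^+\}$ works --- fails here: there are three states $y_1,y_2,y_3$ with $x^+\in\partial(y_i)$, so the balancing set must be assembled from a genuine computation, which is precisely the nontrivial part.

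Your proposed localization step is also both unproven and not the mechanism the paper uses. You want to argue that any state in the bigrading of $x^+$ (or one Maslov grading above) must run along the diagonal through the two twist boxes, because deviating "costs Alexander grading"; this is a substantive claim about which generators occupy an extremal grading, it is not established by your sketch, and note that $A(x^+)=(sl+1)/2$ is not in general the top Alexander grading, so "extremality" is not automatic. The paper achieves uniformity in $a$ and $b$ differently and more cheaply: it iteratively computes the closure of $\{x^+\}$ under the differential and its adjoint ($\A_k$ = new states mapping to $\B_{k-1}$, $\B_k$ = new targets of $\A_k$), observes that every empty rectangle encountered stays within the first five columns --- so all $22$ relevant states have the form $(\pi(1),\dots,\pi(5),6,7,\dots,2a+2b+11)$ and the computation is manifestly independent of $a,b$ --- and then row-reduces an explicit $12\times 10$ matrix to see that $x^+$ is not in the image of $D$. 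To complete your argument you would need either to prove your grading-localization lemma and then enumerate the states in the two relevant bigradings, or to run the same closure computation the paper runs; in either case the finite check you defer as "routine bookkeeping" is the proof.
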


\begin{proof}
We will only need to consider states of the form
\[
(\pi(1),\pi(2),\pi(3),\pi(4),\pi(5),6,7,8,\dots,2a+2b+11),
\]
which we abbreviate as $\pi(1),\pi(2),\pi(3),\pi(4),\pi(5)$. In this
notation, $x^+$ for $G_2(a,b)$ is $1,5,4,3,2$. To determine whether
$x^+$ is null-homologous, we work as in \cite{bib:NOT}.

Let $\A_0 =
\emptyset$ and $\B_0 = \{x^+\}$, and inductively define $\A_k,\B_k$ as
follows: let
\begin{align*}
\A_k &= \{\text{states } x\not\in \A_{k-1}
\, | \, y\rightarrow x \text{ for some } x\in\B_{k-1} \} \\
\B_k &= \{\text{states } y\not\in \B_{k-1}
\, | \, y\rightarrow x \text{ for some } x\in\A_k\}.
\end{align*}
This process terminates at some point; let $A,B$ be the free vector
spaces over $\Z/2$ generated by elements of all $\A_k,\B_k$
respectively, and let $C' = A \oplus B$. Then the differential
$\partial$ on $\CFKtilde$ induces a map $D\colon\thinspace A\to B$,
and $x^+$ is null-homologous in $\CFKtilde$ if and only if $x^+\in B$
is in the image of $D$.

For the grid $G_2(a,b)$, we successively calculate the $\A_k$ and
$\B_k$ as follows; see the appendix for the relevant pictures.
\begin{center}
$ x^+ =\  1,5,4,3,2 \ \  \mathfrak{B}_0 = \{x^+\} $  \\

$ \left. \begin{array}{rl}
y_1 =\  1,4,5,3,2 &  y_1 \rightarrow x^+ \\
y_2 =\ 1,5,3,4,2 & y_2 \rightarrow x^+ \\
y_3 =\ 1,5,4,2,3 & y_3 \rightarrow x^+ \end{array} \right\}
\mathfrak{A}_1 = \{y_1,y_2,y_3\} $ \linebreak[4]

$ \left. \begin{array}{rl}
x_1 =\ 4,1,5,3,2 & x_1 \leftarrow y_1 \\
x_2 =\ 3,5,1,4,2 & x_2 \leftarrow y_2 \\
x_3 =\ 2,5,4,1,3 & x_3 \leftarrow y_3  \end{array} \right\}
\mathfrak{B}_1 = \{x_1,x_2,x_3\}$ \linebreak[4]

$ \left. \begin{array}{rl}
y_4 =\ 4,1,5,2,3 & y_4 \rightarrow x_1 \\
y_5 =\ 3,5,1,2,4 & y_5 \rightarrow x_2 \\
y_6 =\ 2,4,5,1,3 & y_6 \rightarrow x_3 \\
y_7 =\ 2,5,3,1,4 & y_7 \rightarrow x_3 \end{array}
\right\}
\mathfrak{A}_2 = \{y_4,y_5,y_6,y_7\}$ \linebreak[4]

$ \left. \begin{array}{rl}
x_4 =\ 4,2,5,1,3 & x_4 \leftarrow y_4,y_6 \\
x_5 =\ 3,5,2,1,4 & x_5 \leftarrow y_5,y_7 \\
{x_6} =\ 4,1,2,5,3 & x_6 \leftarrow y_4 \\
{x_7} =\ 2,4,1,5,3 & x_7 \leftarrow y_6 \end{array} \right\}
\mathfrak{B}_2 = \{x_4,x_5,x_6,x_7\} $ \linebreak[4]
$ \left. \begin{array}{rl}
y_8 =\ 1,4,2,5,3 & y_8 \rightarrow x_6,x_7 \\
y_9 =\ 4,1,2,3,5 & y_9 \rightarrow x_6 \\
y_{10} =\ 5,1,2,4,3 & y_{10} \rightarrow x_6 \\
y_{11} =\ 2,4,1,3,5 & y_{11} \rightarrow x_7 \end{array} \right\}
\mathfrak{A}_3 = \{y_8,y_9,y_{10},y_{11}\} $ \linebreak[4]

$ \left. \begin{array}{rl}
x_8 =\ 4,2,1,3,5 & x_8 \leftarrow y_9,y_{11} \\
x_9 =\ 5,2,1,4,3 & x_9 \leftarrow y_{10} \end{array} \right\}
\mathfrak{B}_3 = \{x_8,x_9\} $ \linebreak[4]

$ \left. \begin{array}{rl}
y_{12} =\ 5,2,1,3,4 & y_{12} \rightarrow x_8,x_9  \end{array} \right\}
\mathfrak{A}_4 = \{y_{12}\}. $
\end{center}
We have $\mathfrak{B}_4 = \mathfrak{A}_5 = \emptyset$ and the process
terminates here.

The subcomplex $C$ is generated by
$x^+,x_1,\dots,x_9,y_1,\dots,y_{12}$. Below is the matrix for the
adjoint (transpose) of the map $D\colon\thinspace A\to B$, with rows
corresponding to
$x^+,x_1,\dots,x_9$ and columns corresponding to $y_1,\dots,y_{12}$, along
with its row reduction:
\[ \left(
\begin{smallmatrix}
1&1&0&0&0&0&0&0&0&0\\
1&0&1&0&0&0&0&0&0&0\\
1&0&0&1&0&0&0&0&0&0\\
0&1&0&0&1&0&1&0&0&0\\
0&0&1&0&0&1&0&0&0&0\\
0&0&0&1&1&0&0&1&0&0\\
0&0&0&1&0&1&0&0&0&0\\
0&0&0&0&0&0&1&1&0&0 \\
0&0&0&0&0&0&1&0&1&0 \\
0&0&0&0&0&0&1&0&0&1 \\
0&0&0&0&0&0&0&1&1&0 \\
0&0&0&0&0&0&0&0&1&1 \end{smallmatrix} \right) \stackrel{\text{row
reduce}}{\longrightarrow}
\left( \begin{smallmatrix}
1&0&0&0&0&1&0&0&0&0\\
0&1&0&0&0&1&0&0&0&0\\
0&0&1&0&0&1&0&0&0&0\\
0&0&0&1&0&1&0&0&0&0\\
0&0&0&0&1&1&0&0&0&1\\
0&0&0&0&0&0&1&0&0&1\\
0&0&0&0&0&0&0&1&0&1\\
0&0&0&0&0&0&0&0&1&1\\
0&0&0&0&0&0&0&0&0&0\\
0&0&0&0&0&0&0&0&0&0\\
0&0&0&0&0&0&0&0&0&0\\
0&0&0&0&0&0&0&0&0&0\\
 \end{smallmatrix} \right). \]
We see that $[1,0,0,0,0,0,0,0,0,0]$ is not in the row
space of the row-reduced matrix, and hence $x^+$ is not in the image
of $D$. We conclude that
$\widetilde{\theta}(T_2(a,b))$ is not null-homologous.
\end{proof}

Propositions~\ref{prop:t1} and~\ref{prop:t2} show that $T_1(a,b)$ and
$T_2(a,b)$ are different as transverse knots, and
Theorem~\ref{thm:main} follows.

\vspace{0.5in}


\vspace{0.5in}


\section*{Appendix: Grid states for $G_2(a,b)$}

On the next two pages, we depict the grid states $x_i,y_i$ from the
proof of Proposition~\ref{prop:t2}. For each state, the rectangles
comprising the differential for the $y$ states, or the adjoint differential for
the $x$ states, are shaded, with darker shading for rectangles from
previous states and lighter shading for the others. Some rectangles
overlap.

\vspace{0.5in}

\begin{figure}[htbp] \centering

\subfigure[$x^+$]{ \begin{pspicture}(0,0)(4.2,4.2) \psset{xunit=0.3cm,yunit=0.3cm, runit=0.3}
\psframe[fillstyle=solid, fillcolor=lightgray](1,4)(2,3) \psframe[fillstyle=solid, fillcolor=lightgray](2,3)(3,2) \psframe[fillstyle=solid, fillcolor=lightgray](3,2)(4,1) \psgrid[subgriddiv=1,gridlabels=0,linewidth=0.05mm](0,0)(14,14)
\psframe[linewidth=0.5mm](9,9)(14,14) \psframe[linewidth=0.5mm](4,4)(8,8)
\psline(0.75,3.25)(0.25,3.75)\psline(0.25,3.25)(0.75,3.75)\psline(1.75,2.25)(1.25,2.75)\psline(1.25,2.25)(1.75,2.75)
\psline(2.75,1.25)(2.25,1.75)\psline(2.25,1.25)(2.75,1.75)\psline(3.25,0.25)(3.75,0.75)\psline(3.75,0.25)(3.25,0.75)
\psline(4.25,4.25)(4.75,4.75)\psline(4.75,4.25)(4.25,4.75)\psline(5.25,5.25)(5.75,5.75)\psline(5.75,5.25)(5.25,5.75)
\psline(6.25,6.25)(6.75,6.75)\psline(6.75,6.25)(6.25,6.75)\psline(7.25,7.25)(7.75,7.75)\psline(7.75,7.25)(7.25,7.75)
\psline(8.25,8.25)(8.75,8.75)\psline(8.75,8.25)(8.25,8.75)\psline(9.25,9.25)(9.75,9.75)\psline(9.75,9.25)(9.25,9.75)
\psline(10.25,10.25)(10.75,10.75)\psline(10.75,10.25)(10.25,10.75)
\psline(11.25,11.25)(11.75,11.75)\psline(11.75,11.25)(11.25,11.75)
\psline(12.25,12.25)(12.75,12.75)\psline(12.75,12.25)(12.25,12.75)
\psline(13.25,13.25)(13.75,13.75)\psline(13.75,13.25)(13.25,13.75)
\pscircle(0.5,12.5){0.3}\pscircle(1.5,8.5){0.3}\pscircle(2.5,3.5){0.3}\pscircle(3.5,7.5){0.3}\pscircle(4.5,13.5){0.3}\pscircle(5.5,2.5){0.3}\pscircle(6.5,4.5){0.3}\pscircle(7.5,5.5){0.3}\pscircle(8.5,1.5){0.3}\pscircle(9.5,0.5){0.3}\pscircle(10.5,6.5){0.3}\pscircle(11.5,9.5){0.3}\pscircle(12.5,10.5){0.3}\pscircle(13.5,11.5){0.3}
\psdots*[dotsize=.5](0,0)(1,4)(2,3)(3,2)(4,1)
\psdots*[dotsize=.5](5,5)(6,6)(7,7)(8,8)(9,9)(10,10)(11,11)(12,12)(13,13)
\end{pspicture} }\subfigure[$y_1$]{ \begin{pspicture}(0,0)(4.2,4.2) \psset{xunit=0.3cm,yunit=0.3cm, runit=0.3}
\psframe[fillstyle=solid, fillcolor=lightgray](0,0)(1,3)
\psframe[fillstyle=solid, fillcolor=darkgray](1,3)(2,4) \psgrid[subgriddiv=1,gridlabels=0,linewidth=0.05mm](0,0)(14,14)
\psframe[linewidth=0.5mm](9,9)(14,14)\psframe[linewidth=0.5mm](4,4)(8,8)
\psline(0.75,3.25)(0.25,3.75)\psline(0.25,3.25)(0.75,3.75)\psline(1.75,2.25)(1.25,2.75)\psline(1.25,2.25)(1.75,2.75)
\psline(2.75,1.25)(2.25,1.75)\psline(2.25,1.25)(2.75,1.75)\psline(3.25,0.25)(3.75,0.75)\psline(3.75,0.25)(3.25,0.75)
\psline(4.25,4.25)(4.75,4.75)\psline(4.75,4.25)(4.25,4.75)\psline(5.25,5.25)(5.75,5.75)\psline(5.75,5.25)(5.25,5.75)
\psline(6.25,6.25)(6.75,6.75)\psline(6.75,6.25)(6.25,6.75)\psline(7.25,7.25)(7.75,7.75)\psline(7.75,7.25)(7.25,7.75)
\psline(8.25,8.25)(8.75,8.75)\psline(8.75,8.25)(8.25,8.75)\psline(9.25,9.25)(9.75,9.75)\psline(9.75,9.25)(9.25,9.75)
\psline(10.25,10.25)(10.75,10.75)\psline(10.75,10.25)(10.25,10.75)\psline(11.25,11.25)(11.75,11.75)\psline(11.75,11.25)(11.25,11.75)\psline(12.25,12.25)(12.75,12.75)\psline(12.75,12.25)(12.25,12.75)
\psline(13.25,13.25)(13.75,13.75)\psline(13.75,13.25)(13.25,13.75)
\pscircle(0.5,12.5){0.3}
\pscircle(1.5,8.5){0.3}
\pscircle(2.5,3.5){0.3}
\pscircle(3.5,7.5){0.3}
\pscircle(4.5,13.5){0.3}
\pscircle(5.5,2.5){0.3}
\pscircle(6.5,4.5){0.3}
\pscircle(7.5,5.5){0.3}
\pscircle(8.5,1.5){0.3}
\pscircle(9.5,0.5){0.3}
\pscircle(10.5,6.5){0.3}
\pscircle(11.5,9.5){0.3}
\pscircle(12.5,10.5){0.3}
\pscircle(13.5,11.5){0.3}
\psdots*[dotsize=.5](0,0)(1,3)(2,4)(3,2)(4,1)
\psdots*[dotsize=.5](5,5)(6,6)(7,7)(8,8)(9,9)(10,10)(11,11)(12,12)(13,13)
\end{pspicture}  }\subfigure[$y_2$]{ \begin{pspicture}(0,0)(4.2,4.2) \psset{xunit=0.3cm,yunit=0.3cm, runit=0.3}
\psframe[fillstyle=solid, fillcolor=lightgray](0,0)(2,2)
\psframe[fillstyle=solid, fillcolor=darkgray](2,2)(3,3) \psgrid[subgriddiv=1,gridlabels=0,linewidth=0.05mm](0,0)(14,14)
\psframe[linewidth=0.5mm](9,9)(14,14)
\psframe[linewidth=0.5mm](4,4)(8,8)
\psline(0.75,3.25)(0.25,3.75)\psline(0.25,3.25)(0.75,3.75)
\psline(1.75,2.25)(1.25,2.75)\psline(1.25,2.25)(1.75,2.75)
\psline(2.75,1.25)(2.25,1.75)\psline(2.25,1.25)(2.75,1.75)
\psline(3.25,0.25)(3.75,0.75)\psline(3.75,0.25)(3.25,0.75)
\psline(4.25,4.25)(4.75,4.75)\psline(4.75,4.25)(4.25,4.75)
\psline(5.25,5.25)(5.75,5.75)\psline(5.75,5.25)(5.25,5.75)
\psline(6.25,6.25)(6.75,6.75)\psline(6.75,6.25)(6.25,6.75)
\psline(7.25,7.25)(7.75,7.75)\psline(7.75,7.25)(7.25,7.75)
\psline(8.25,8.25)(8.75,8.75)\psline(8.75,8.25)(8.25,8.75)
\psline(9.25,9.25)(9.75,9.75)\psline(9.75,9.25)(9.25,9.75)
\psline(10.25,10.25)(10.75,10.75)\psline(10.75,10.25)(10.25,10.75)
\psline(11.25,11.25)(11.75,11.75)\psline(11.75,11.25)(11.25,11.75)
\psline(12.25,12.25)(12.75,12.75)\psline(12.75,12.25)(12.25,12.75)
\psline(13.25,13.25)(13.75,13.75)\psline(13.75,13.25)(13.25,13.75)
\pscircle(0.5,12.5){0.3}
\pscircle(1.5,8.5){0.3}
\pscircle(2.5,3.5){0.3}
\pscircle(3.5,7.5){0.3}
\pscircle(4.5,13.5){0.3}
\pscircle(5.5,2.5){0.3}
\pscircle(6.5,4.5){0.3}
\pscircle(7.5,5.5){0.3}
\pscircle(8.5,1.5){0.3}
\pscircle(9.5,0.5){0.3}
\pscircle(10.5,6.5){0.3}
\pscircle(11.5,9.5){0.3}
\pscircle(12.5,10.5){0.3}
\pscircle(13.5,11.5){0.3}
\psdots*[dotsize=.5](0,0)(1,4)(2,2)(3,3)(4,1)
\psdots*[dotsize=.5](5,5)(6,6)(7,7)(8,8)(9,9)(10,10)(11,11)(12,12)(13,13)
\end{pspicture} }

\subfigure[$y_3$]{ \begin{pspicture}(0,0)(4.2,4.2) \psset{xunit=0.3cm,yunit=0.3cm, runit=0.3} \psframe[fillstyle=solid, fillcolor=lightgray](0,0)(3,1)
\psframe[fillstyle=solid, fillcolor=darkgray](3,1)(4,2) \psgrid[subgriddiv=1,gridlabels=0,linewidth=0.05mm](0,0)(14,14)
\psframe[linewidth=0.5mm](9,9)(14,14) \psframe[linewidth=0.5mm](4,4)(8,8)
\psline(0.75,3.25)(0.25,3.75)\psline(0.25,3.25)(0.75,3.75)\psline(1.75,2.25)(1.25,2.75)\psline(1.25,2.25)(1.75,2.75)
\psline(2.75,1.25)(2.25,1.75)\psline(2.25,1.25)(2.75,1.75)\psline(3.25,0.25)(3.75,0.75)\psline(3.75,0.25)(3.25,0.75)
\psline(4.25,4.25)(4.75,4.75)\psline(4.75,4.25)(4.25,4.75)\psline(5.25,5.25)(5.75,5.75)\psline(5.75,5.25)(5.25,5.75)
\psline(6.25,6.25)(6.75,6.75)\psline(6.75,6.25)(6.25,6.75)\psline(7.25,7.25)(7.75,7.75)\psline(7.75,7.25)(7.25,7.75)
\psline(8.25,8.25)(8.75,8.75)\psline(8.75,8.25)(8.25,8.75)\psline(9.25,9.25)(9.75,9.75)\psline(9.75,9.25)(9.25,9.75)
\psline(10.25,10.25)(10.75,10.75)\psline(10.75,10.25)(10.25,10.75)
\psline(11.25,11.25)(11.75,11.75)\psline(11.75,11.25)(11.25,11.75)
\psline(12.25,12.25)(12.75,12.75)\psline(12.75,12.25)(12.25,12.75)
\psline(13.25,13.25)(13.75,13.75)\psline(13.75,13.25)(13.25,13.75)
\pscircle(0.5,12.5){0.3}\pscircle(1.5,8.5){0.3}\pscircle(2.5,3.5){0.3}\pscircle(3.5,7.5){0.3}\pscircle(4.5,13.5){0.3}\pscircle(5.5,2.5){0.3}\pscircle(6.5,4.5){0.3}\pscircle(7.5,5.5){0.3}\pscircle(8.5,1.5){0.3}\pscircle(9.5,0.5){0.3}\pscircle(10.5,6.5){0.3}\pscircle(11.5,9.5){0.3}\pscircle(12.5,10.5){0.3}\pscircle(13.5,11.5){0.3}
\psdots*[dotsize=.5](0,0)(1,4)(2,3)(3,1)(4,2)
\psdots*[dotsize=.5](5,5)(6,6)(7,7)(8,8)(9,9)(10,10)(11,11)(12,12)(13,13)
\end{pspicture}

}\subfigure[$x_1$]{ \begin{pspicture}(0,0)(4.2,4.2) \psset{xunit=0.3cm,yunit=0.3cm, runit=0.3} \psframe[fillstyle=solid, fillcolor=darkgray](0,0)(1,3)
\psframe[fillstyle=solid, fillcolor=lightgray](3,1)(4,2) \psgrid[subgriddiv=1,gridlabels=0,linewidth=0.05mm](0,0)(14,14)
\psframe[linewidth=0.5mm](9,9)(14,14) \psframe[linewidth=0.5mm](4,4)(8,8)
\psline(0.75,3.25)(0.25,3.75)\psline(0.25,3.25)(0.75,3.75)\psline(1.75,2.25)(1.25,2.75)\psline(1.25,2.25)(1.75,2.75)
\psline(2.75,1.25)(2.25,1.75)\psline(2.25,1.25)(2.75,1.75)\psline(3.25,0.25)(3.75,0.75)\psline(3.75,0.25)(3.25,0.75)
\psline(4.25,4.25)(4.75,4.75)\psline(4.75,4.25)(4.25,4.75)\psline(5.25,5.25)(5.75,5.75)\psline(5.75,5.25)(5.25,5.75)
\psline(6.25,6.25)(6.75,6.75)\psline(6.75,6.25)(6.25,6.75)\psline(7.25,7.25)(7.75,7.75)\psline(7.75,7.25)(7.25,7.75)
\psline(8.25,8.25)(8.75,8.75)\psline(8.75,8.25)(8.25,8.75)\psline(9.25,9.25)(9.75,9.75)\psline(9.75,9.25)(9.25,9.75)
\psline(10.25,10.25)(10.75,10.75)\psline(10.75,10.25)(10.25,10.75)
\psline(11.25,11.25)(11.75,11.75)\psline(11.75,11.25)(11.25,11.75)
\psline(12.25,12.25)(12.75,12.75)\psline(12.75,12.25)(12.25,12.75)
\psline(13.25,13.25)(13.75,13.75)\psline(13.75,13.25)(13.25,13.75)
\pscircle(0.5,12.5){0.3}\pscircle(1.5,8.5){0.3}\pscircle(2.5,3.5){0.3}\pscircle(3.5,7.5){0.3}\pscircle(4.5,13.5){0.3}\pscircle(5.5,2.5){0.3}\pscircle(6.5,4.5){0.3}\pscircle(7.5,5.5){0.3}\pscircle(8.5,1.5){0.3}\pscircle(9.5,0.5){0.3}\pscircle(10.5,6.5){0.3}\pscircle(11.5,9.5){0.3}\pscircle(12.5,10.5){0.3}\pscircle(13.5,11.5){0.3}
\psdots*[dotsize=.5](0,3)(1,0)(2,4)(3,2)(4,1)
\psdots*[dotsize=.5](5,5)(6,6)(7,7)(8,8)(9,9)(10,10)(11,11)(12,12)(13,13)
\end{pspicture} }\subfigure[$x_2$]{ \begin{pspicture}(0,0)(4.2,4.2) \psset{xunit=0.3cm,yunit=0.3cm, runit=0.3} \psframe[fillstyle=solid, fillcolor=darkgray](0,0)(2,2)
\psframe[fillstyle=solid, fillcolor=lightgray](3,1)(4,3) \psgrid[subgriddiv=1,gridlabels=0,linewidth=0.05mm](0,0)(14,14)
\psframe[linewidth=0.5mm](9,9)(14,14) \psframe[linewidth=0.5mm](4,4)(8,8)
\psline(0.75,3.25)(0.25,3.75)\psline(0.25,3.25)(0.75,3.75)\psline(1.75,2.25)(1.25,2.75)\psline(1.25,2.25)(1.75,2.75)
\psline(2.75,1.25)(2.25,1.75)\psline(2.25,1.25)(2.75,1.75)\psline(3.25,0.25)(3.75,0.75)\psline(3.75,0.25)(3.25,0.75)
\psline(4.25,4.25)(4.75,4.75)\psline(4.75,4.25)(4.25,4.75)\psline(5.25,5.25)(5.75,5.75)\psline(5.75,5.25)(5.25,5.75)
\psline(6.25,6.25)(6.75,6.75)\psline(6.75,6.25)(6.25,6.75)\psline(7.25,7.25)(7.75,7.75)\psline(7.75,7.25)(7.25,7.75)
\psline(8.25,8.25)(8.75,8.75)\psline(8.75,8.25)(8.25,8.75)\psline(9.25,9.25)(9.75,9.75)\psline(9.75,9.25)(9.25,9.75)
\psline(10.25,10.25)(10.75,10.75)\psline(10.75,10.25)(10.25,10.75)
\psline(11.25,11.25)(11.75,11.75)\psline(11.75,11.25)(11.25,11.75)
\psline(12.25,12.25)(12.75,12.75)\psline(12.75,12.25)(12.25,12.75)
\psline(13.25,13.25)(13.75,13.75)\psline(13.75,13.25)(13.25,13.75)
\pscircle(0.5,12.5){0.3}\pscircle(1.5,8.5){0.3}\pscircle(2.5,3.5){0.3}\pscircle(3.5,7.5){0.3}\pscircle(4.5,13.5){0.3}\pscircle(5.5,2.5){0.3}\pscircle(6.5,4.5){0.3}\pscircle(7.5,5.5){0.3}\pscircle(8.5,1.5){0.3}\pscircle(9.5,0.5){0.3}\pscircle(10.5,6.5){0.3}\pscircle(11.5,9.5){0.3}\pscircle(12.5,10.5){0.3}\pscircle(13.5,11.5){0.3}
\psdots*[dotsize=.5](0,2)(1,4)(2,0)(3,3)(4,1)
\psdots*[dotsize=.5](5,5)(6,6)(7,7)(8,8)(9,9)(10,10)(11,11)(12,12)(13,13)
\end{pspicture} }

\subfigure[$x_3$]{ \begin{pspicture}(0,0)(4.2,4.2) \psset{xunit=0.3cm,yunit=0.3cm, runit=0.3} \psframe[fillstyle=solid, fillcolor=darkgray](0,0)(3,1)
\psframe[fillstyle=solid, fillcolor=lightgray](2,2)(4,3)
\psframe[fillstyle=solid, fillcolor=lightgray](1,3)(2,4) \psgrid[subgriddiv=1,gridlabels=0,linewidth=0.05mm](0,0)(14,14)
\psframe[linewidth=0.5mm](9,9)(14,14) \psframe[linewidth=0.5mm](4,4)(8,8)
\psline(0.75,3.25)(0.25,3.75)\psline(0.25,3.25)(0.75,3.75)\psline(1.75,2.25)(1.25,2.75)\psline(1.25,2.25)(1.75,2.75)
\psline(2.75,1.25)(2.25,1.75)\psline(2.25,1.25)(2.75,1.75)\psline(3.25,0.25)(3.75,0.75)\psline(3.75,0.25)(3.25,0.75)
\psline(4.25,4.25)(4.75,4.75)\psline(4.75,4.25)(4.25,4.75)\psline(5.25,5.25)(5.75,5.75)\psline(5.75,5.25)(5.25,5.75)
\psline(6.25,6.25)(6.75,6.75)\psline(6.75,6.25)(6.25,6.75)\psline(7.25,7.25)(7.75,7.75)\psline(7.75,7.25)(7.25,7.75)
\psline(8.25,8.25)(8.75,8.75)\psline(8.75,8.25)(8.25,8.75)\psline(9.25,9.25)(9.75,9.75)\psline(9.75,9.25)(9.25,9.75)
\psline(10.25,10.25)(10.75,10.75)\psline(10.75,10.25)(10.25,10.75)
\psline(11.25,11.25)(11.75,11.75)\psline(11.75,11.25)(11.25,11.75)
\psline(12.25,12.25)(12.75,12.75)\psline(12.75,12.25)(12.25,12.75)
\psline(13.25,13.25)(13.75,13.75)\psline(13.75,13.25)(13.25,13.75)
\pscircle(0.5,12.5){0.3}\pscircle(1.5,8.5){0.3}\pscircle(2.5,3.5){0.3}\pscircle(3.5,7.5){0.3}\pscircle(4.5,13.5){0.3}\pscircle(5.5,2.5){0.3}\pscircle(6.5,4.5){0.3}\pscircle(7.5,5.5){0.3}\pscircle(8.5,1.5){0.3}\pscircle(9.5,0.5){0.3}\pscircle(10.5,6.5){0.3}\pscircle(11.5,9.5){0.3}\pscircle(12.5,10.5){0.3}\pscircle(13.5,11.5){0.3}
\psdots*[dotsize=.5](0,1)(1,4)(2,3)(3,0)(4,2)
\psdots*[dotsize=.5](5,5)(6,6)(7,7)(8,8)(9,9)(10,10)(11,11)(12,12)(13,13)
\end{pspicture}

}\subfigure[$y_4$]{ \begin{pspicture}(0,0)(4.2,4.2) \psset{xunit=0.3cm,yunit=0.3cm, runit=0.3} \psframe[fillstyle=solid, fillcolor=darkgray](3,1)(4,2)
\psframe[fillstyle=solid, fillcolor=lightgray](3,1)(1,0) \psframe[fillstyle=solid, fillcolor=lightgray](2,4)(3,14) \psgrid[subgriddiv=1,gridlabels=0,linewidth=0.05mm](0,0)(14,14)
\psframe[linewidth=0.5mm](9,9)(14,14) \psframe[linewidth=0.5mm](4,4)(8,8)
\psline(0.75,3.25)(0.25,3.75)\psline(0.25,3.25)(0.75,3.75)\psline(1.75,2.25)(1.25,2.75)\psline(1.25,2.25)(1.75,2.75)
\psline(2.75,1.25)(2.25,1.75)\psline(2.25,1.25)(2.75,1.75)\psline(3.25,0.25)(3.75,0.75)\psline(3.75,0.25)(3.25,0.75)
\psline(4.25,4.25)(4.75,4.75)\psline(4.75,4.25)(4.25,4.75)\psline(5.25,5.25)(5.75,5.75)\psline(5.75,5.25)(5.25,5.75)
\psline(6.25,6.25)(6.75,6.75)\psline(6.75,6.25)(6.25,6.75)\psline(7.25,7.25)(7.75,7.75)\psline(7.75,7.25)(7.25,7.75)
\psline(8.25,8.25)(8.75,8.75)\psline(8.75,8.25)(8.25,8.75)\psline(9.25,9.25)(9.75,9.75)\psline(9.75,9.25)(9.25,9.75)
\psline(10.25,10.25)(10.75,10.75)\psline(10.75,10.25)(10.25,10.75)
\psline(11.25,11.25)(11.75,11.75)\psline(11.75,11.25)(11.25,11.75)
\psline(12.25,12.25)(12.75,12.75)\psline(12.75,12.25)(12.25,12.75)
\psline(13.25,13.25)(13.75,13.75)\psline(13.75,13.25)(13.25,13.75)
\pscircle(0.5,12.5){0.3}\pscircle(1.5,8.5){0.3}\pscircle(2.5,3.5){0.3}\pscircle(3.5,7.5){0.3}\pscircle(4.5,13.5){0.3}\pscircle(5.5,2.5){0.3}\pscircle(6.5,4.5){0.3}\pscircle(7.5,5.5){0.3}\pscircle(8.5,1.5){0.3}\pscircle(9.5,0.5){0.3}\pscircle(10.5,6.5){0.3}\pscircle(11.5,9.5){0.3}\pscircle(12.5,10.5){0.3}\pscircle(13.5,11.5){0.3}
\psdots*[dotsize=.5](0,3)(1,0)(2,4)(3,1)(4,2)
\psdots*[dotsize=.5](5,5)(6,6)(7,7)(8,8)(9,9)(10,10)(11,11)(12,12)(13,13)
\end{pspicture} }\subfigure[$y_5$]{ \begin{pspicture}(0,0)(4.2,4.2) \psset{xunit=0.3cm,yunit=0.3cm, runit=0.3} \psframe[fillstyle=solid, fillcolor=darkgray](3,1)(4,3)
\psframe[fillstyle=solid, fillcolor=lightgray](2,0)(3,1) \psgrid[subgriddiv=1,gridlabels=0,linewidth=0.05mm](0,0)(14,14)
\psframe[linewidth=0.5mm](9,9)(14,14) \psframe[linewidth=0.5mm](4,4)(8,8)
\psline(0.75,3.25)(0.25,3.75)\psline(0.25,3.25)(0.75,3.75)\psline(1.75,2.25)(1.25,2.75)\psline(1.25,2.25)(1.75,2.75)
\psline(2.75,1.25)(2.25,1.75)\psline(2.25,1.25)(2.75,1.75)\psline(3.25,0.25)(3.75,0.75)\psline(3.75,0.25)(3.25,0.75)
\psline(4.25,4.25)(4.75,4.75)\psline(4.75,4.25)(4.25,4.75)\psline(5.25,5.25)(5.75,5.75)\psline(5.75,5.25)(5.25,5.75)
\psline(6.25,6.25)(6.75,6.75)\psline(6.75,6.25)(6.25,6.75)\psline(7.25,7.25)(7.75,7.75)\psline(7.75,7.25)(7.25,7.75)
\psline(8.25,8.25)(8.75,8.75)\psline(8.75,8.25)(8.25,8.75)\psline(9.25,9.25)(9.75,9.75)\psline(9.75,9.25)(9.25,9.75)
\psline(10.25,10.25)(10.75,10.75)\psline(10.75,10.25)(10.25,10.75)
\psline(11.25,11.25)(11.75,11.75)\psline(11.75,11.25)(11.25,11.75)
\psline(12.25,12.25)(12.75,12.75)\psline(12.75,12.25)(12.25,12.75)
\psline(13.25,13.25)(13.75,13.75)\psline(13.75,13.25)(13.25,13.75)
\pscircle(0.5,12.5){0.3}\pscircle(1.5,8.5){0.3}\pscircle(2.5,3.5){0.3}\pscircle(3.5,7.5){0.3}\pscircle(4.5,13.5){0.3}\pscircle(5.5,2.5){0.3}\pscircle(6.5,4.5){0.3}\pscircle(7.5,5.5){0.3}\pscircle(8.5,1.5){0.3}\pscircle(9.5,0.5){0.3}\pscircle(10.5,6.5){0.3}\pscircle(11.5,9.5){0.3}\pscircle(12.5,10.5){0.3}\pscircle(13.5,11.5){0.3}
\psdots*[dotsize=.5](0,2)(1,4)(2,0)(3,1)(4,3)
\psdots*[dotsize=.5](5,5)(6,6)(7,7)(8,8)(9,9)(10,10)(11,11)(12,12)(13,13)
\end{pspicture} }

\subfigure[$y_6$]{ \begin{pspicture}(0,0)(4.2,4.2) \psset{xunit=0.3cm,yunit=0.3cm, runit=0.3} \psframe[fillstyle=solid, fillcolor=darkgray](1,3)(2,4)
\psframe[fillstyle=solid, fillcolor=lightgray](0,1)(1,3) \psframe[fillstyle=solid, fillcolor=lightgray](2,4)(3,14)  \psgrid[subgriddiv=1,gridlabels=0,linewidth=0.05mm](0,0)(14,14)
\psframe[linewidth=0.5mm](9,9)(14,14) \psframe[linewidth=0.5mm](4,4)(8,8)
\psline(0.75,3.25)(0.25,3.75)\psline(0.25,3.25)(0.75,3.75)\psline(1.75,2.25)(1.25,2.75)\psline(1.25,2.25)(1.75,2.75)
\psline(2.75,1.25)(2.25,1.75)\psline(2.25,1.25)(2.75,1.75)\psline(3.25,0.25)(3.75,0.75)\psline(3.75,0.25)(3.25,0.75)
\psline(4.25,4.25)(4.75,4.75)\psline(4.75,4.25)(4.25,4.75)\psline(5.25,5.25)(5.75,5.75)\psline(5.75,5.25)(5.25,5.75)
\psline(6.25,6.25)(6.75,6.75)\psline(6.75,6.25)(6.25,6.75)\psline(7.25,7.25)(7.75,7.75)\psline(7.75,7.25)(7.25,7.75)
\psline(8.25,8.25)(8.75,8.75)\psline(8.75,8.25)(8.25,8.75)\psline(9.25,9.25)(9.75,9.75)\psline(9.75,9.25)(9.25,9.75)
\psline(10.25,10.25)(10.75,10.75)\psline(10.75,10.25)(10.25,10.75)
\psline(11.25,11.25)(11.75,11.75)\psline(11.75,11.25)(11.25,11.75)
\psline(12.25,12.25)(12.75,12.75)\psline(12.75,12.25)(12.25,12.75)
\psline(13.25,13.25)(13.75,13.75)\psline(13.75,13.25)(13.25,13.75)
\pscircle(0.5,12.5){0.3}\pscircle(1.5,8.5){0.3}\pscircle(2.5,3.5){0.3}\pscircle(3.5,7.5){0.3}\pscircle(4.5,13.5){0.3}\pscircle(5.5,2.5){0.3}\pscircle(6.5,4.5){0.3}\pscircle(7.5,5.5){0.3}\pscircle(8.5,1.5){0.3}\pscircle(9.5,0.5){0.3}\pscircle(10.5,6.5){0.3}\pscircle(11.5,9.5){0.3}\pscircle(12.5,10.5){0.3}\pscircle(13.5,11.5){0.3}
\psdots*[dotsize=.5](0,1)(1,3)(2,4)(3,0)(4,2)
\psdots*[dotsize=.5](5,5)(6,6)(7,7)(8,8)(9,9)(10,10)(11,11)(12,12)(13,13)
\end{pspicture}

}\subfigure[$y_7$]{ \begin{pspicture}(0,0)(4.2,4.2) \psset{xunit=0.3cm,yunit=0.3cm, runit=0.3} \psframe[fillstyle=solid, fillcolor=darkgray](2,2)(4,3)
\psframe[fillstyle=solid, fillcolor=lightgray](0,1)(2,2) \psgrid[subgriddiv=1,gridlabels=0,linewidth=0.05mm](0,0)(14,14)
\psframe[linewidth=0.5mm](9,9)(14,14) \psframe[linewidth=0.5mm](4,4)(8,8)
\psline(0.75,3.25)(0.25,3.75)\psline(0.25,3.25)(0.75,3.75)\psline(1.75,2.25)(1.25,2.75)\psline(1.25,2.25)(1.75,2.75)
\psline(2.75,1.25)(2.25,1.75)\psline(2.25,1.25)(2.75,1.75)\psline(3.25,0.25)(3.75,0.75)\psline(3.75,0.25)(3.25,0.75)
\psline(4.25,4.25)(4.75,4.75)\psline(4.75,4.25)(4.25,4.75)\psline(5.25,5.25)(5.75,5.75)\psline(5.75,5.25)(5.25,5.75)
\psline(6.25,6.25)(6.75,6.75)\psline(6.75,6.25)(6.25,6.75)\psline(7.25,7.25)(7.75,7.75)\psline(7.75,7.25)(7.25,7.75)
\psline(8.25,8.25)(8.75,8.75)\psline(8.75,8.25)(8.25,8.75)\psline(9.25,9.25)(9.75,9.75)\psline(9.75,9.25)(9.25,9.75)
\psline(10.25,10.25)(10.75,10.75)\psline(10.75,10.25)(10.25,10.75)
\psline(11.25,11.25)(11.75,11.75)\psline(11.75,11.25)(11.25,11.75)
\psline(12.25,12.25)(12.75,12.75)\psline(12.75,12.25)(12.25,12.75)
\psline(13.25,13.25)(13.75,13.75)\psline(13.75,13.25)(13.25,13.75)
\pscircle(0.5,12.5){0.3}\pscircle(1.5,8.5){0.3}\pscircle(2.5,3.5){0.3}\pscircle(3.5,7.5){0.3}\pscircle(4.5,13.5){0.3}\pscircle(5.5,2.5){0.3}\pscircle(6.5,4.5){0.3}\pscircle(7.5,5.5){0.3}\pscircle(8.5,1.5){0.3}\pscircle(9.5,0.5){0.3}\pscircle(10.5,6.5){0.3}\pscircle(11.5,9.5){0.3}\pscircle(12.5,10.5){0.3}\pscircle(13.5,11.5){0.3}
\psdots*[dotsize=.5](0,1)(1,4)(2,2)(3,0)(4,3)
\psdots*[dotsize=.5](5,5)(6,6)(7,7)(8,8)(9,9)(10,10)(11,11)(12,12)(13,13)
\end{pspicture} }\subfigure[$x_4$]{ \begin{pspicture}(0,0)(4.2,4.2) \psset{xunit=0.3cm,yunit=0.3cm, runit=0.3} \psframe[fillstyle=solid, fillcolor=darkgray](1,0)(3,1) \psframe[fillstyle=solid, fillcolor=darkgray](0,1)(1,3) \psgrid[subgriddiv=1,gridlabels=0,linewidth=0.05mm](0,0)(14,14)
\psframe[linewidth=0.5mm](9,9)(14,14) \psframe[linewidth=0.5mm](4,4)(8,8)
\psline(0.75,3.25)(0.25,3.75)\psline(0.25,3.25)(0.75,3.75)\psline(1.75,2.25)(1.25,2.75)\psline(1.25,2.25)(1.75,2.75)
\psline(2.75,1.25)(2.25,1.75)\psline(2.25,1.25)(2.75,1.75)\psline(3.25,0.25)(3.75,0.75)\psline(3.75,0.25)(3.25,0.75)
\psline(4.25,4.25)(4.75,4.75)\psline(4.75,4.25)(4.25,4.75)\psline(5.25,5.25)(5.75,5.75)\psline(5.75,5.25)(5.25,5.75)
\psline(6.25,6.25)(6.75,6.75)\psline(6.75,6.25)(6.25,6.75)\psline(7.25,7.25)(7.75,7.75)\psline(7.75,7.25)(7.25,7.75)
\psline(8.25,8.25)(8.75,8.75)\psline(8.75,8.25)(8.25,8.75)\psline(9.25,9.25)(9.75,9.75)\psline(9.75,9.25)(9.25,9.75)
\psline(10.25,10.25)(10.75,10.75)\psline(10.75,10.25)(10.25,10.75)
\psline(11.25,11.25)(11.75,11.75)\psline(11.75,11.25)(11.25,11.75)
\psline(12.25,12.25)(12.75,12.75)\psline(12.75,12.25)(12.25,12.75)
\psline(13.25,13.25)(13.75,13.75)\psline(13.75,13.25)(13.25,13.75)
\pscircle(0.5,12.5){0.3}\pscircle(1.5,8.5){0.3}\pscircle(2.5,3.5){0.3}\pscircle(3.5,7.5){0.3}\pscircle(4.5,13.5){0.3}\pscircle(5.5,2.5){0.3}\pscircle(6.5,4.5){0.3}\pscircle(7.5,5.5){0.3}\pscircle(8.5,1.5){0.3}\pscircle(9.5,0.5){0.3}\pscircle(10.5,6.5){0.3}\pscircle(11.5,9.5){0.3}\pscircle(12.5,10.5){0.3}\pscircle(13.5,11.5){0.3}
\psdots*[dotsize=.5](0,3)(1,1)(2,4)(3,0)(4,2)
\psdots*[dotsize=.5](5,5)(6,6)(7,7)(8,8)(9,9)(10,10)(11,11)(12,12)(13,13)
\end{pspicture} } \end{figure}

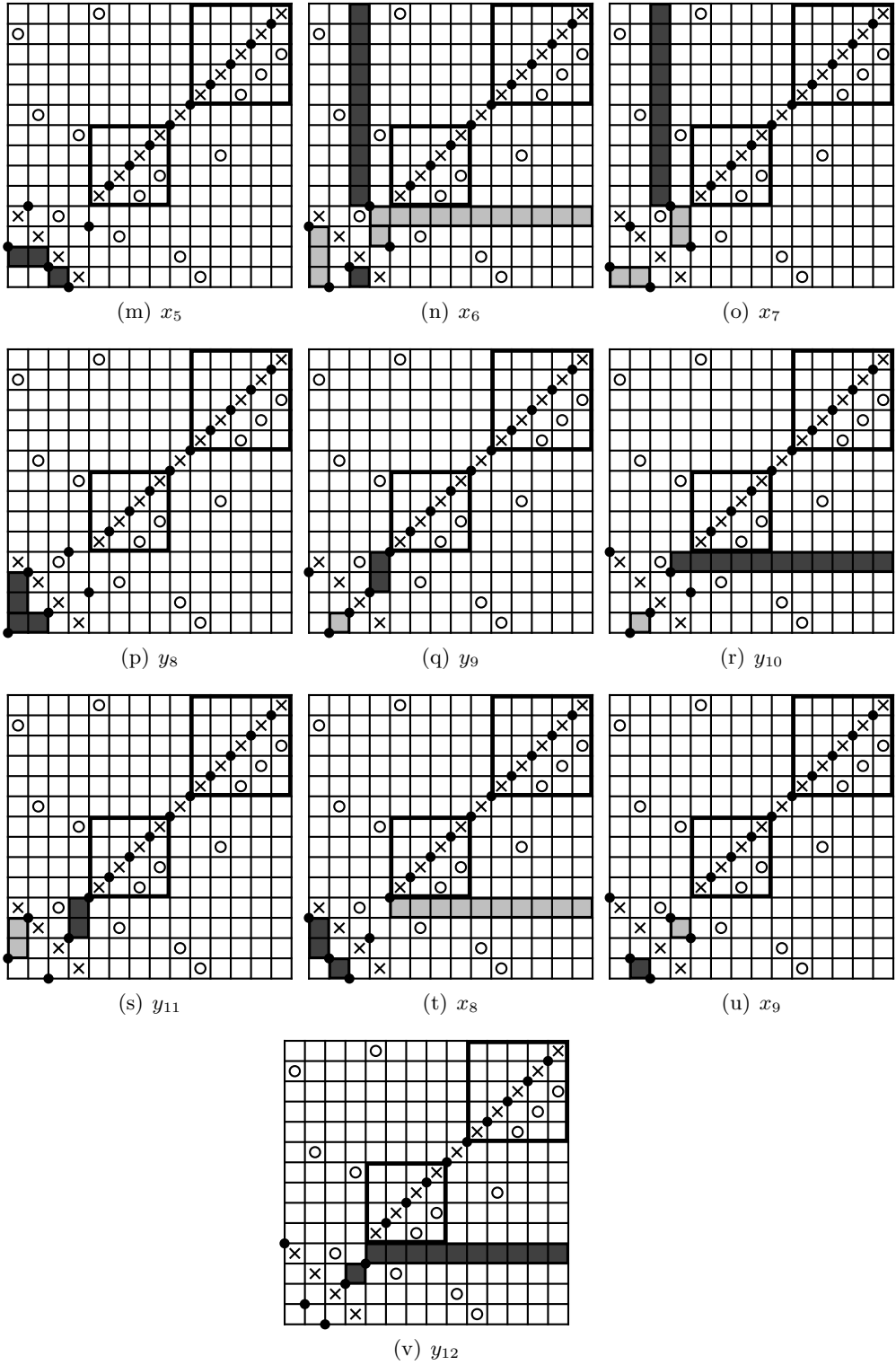
\begin{figure}[htbp] \centering \subfigure[$x_5$]{ \begin{pspicture}(0,0)(4.2,4.2) \psset{xunit=0.3cm,yunit=0.3cm, runit=0.3} \psframe[fillstyle=solid, fillcolor=darkgray](2,0)(3,1) \psframe[fillstyle=solid, fillcolor=darkgray](2,2)(0,1) \psgrid[subgriddiv=1,gridlabels=0,linewidth=0.05mm](0,0)(14,14)
\psframe[linewidth=0.5mm](9,9)(14,14) \psframe[linewidth=0.5mm](4,4)(8,8)
\psline(0.75,3.25)(0.25,3.75)\psline(0.25,3.25)(0.75,3.75)\psline(1.75,2.25)(1.25,2.75)\psline(1.25,2.25)(1.75,2.75)
\psline(2.75,1.25)(2.25,1.75)\psline(2.25,1.25)(2.75,1.75)\psline(3.25,0.25)(3.75,0.75)\psline(3.75,0.25)(3.25,0.75)
\psline(4.25,4.25)(4.75,4.75)\psline(4.75,4.25)(4.25,4.75)\psline(5.25,5.25)(5.75,5.75)\psline(5.75,5.25)(5.25,5.75)
\psline(6.25,6.25)(6.75,6.75)\psline(6.75,6.25)(6.25,6.75)\psline(7.25,7.25)(7.75,7.75)\psline(7.75,7.25)(7.25,7.75)
\psline(8.25,8.25)(8.75,8.75)\psline(8.75,8.25)(8.25,8.75)\psline(9.25,9.25)(9.75,9.75)\psline(9.75,9.25)(9.25,9.75)
\psline(10.25,10.25)(10.75,10.75)\psline(10.75,10.25)(10.25,10.75)
\psline(11.25,11.25)(11.75,11.75)\psline(11.75,11.25)(11.25,11.75)
\psline(12.25,12.25)(12.75,12.75)\psline(12.75,12.25)(12.25,12.75)
\psline(13.25,13.25)(13.75,13.75)\psline(13.75,13.25)(13.25,13.75)
\pscircle(0.5,12.5){0.3}\pscircle(1.5,8.5){0.3}\pscircle(2.5,3.5){0.3}\pscircle(3.5,7.5){0.3}\pscircle(4.5,13.5){0.3}\pscircle(5.5,2.5){0.3}\pscircle(6.5,4.5){0.3}\pscircle(7.5,5.5){0.3}\pscircle(8.5,1.5){0.3}\pscircle(9.5,0.5){0.3}\pscircle(10.5,6.5){0.3}\pscircle(11.5,9.5){0.3}\pscircle(12.5,10.5){0.3}\pscircle(13.5,11.5){0.3}
\psdots*[dotsize=.5](0,2)(1,4)(2,1)(3,0)(4,3)
\psdots*[dotsize=.5](5,5)(6,6)(7,7)(8,8)(9,9)(10,10)(11,11)(12,12)(13,13)
\end{pspicture}

}\subfigure[$x_6$]{ \begin{pspicture}(0,0)(4.2,4.2) \psset{xunit=0.3cm,yunit=0.3cm, runit=0.3} \psframe[fillstyle=solid, fillcolor=darkgray](2,4)(3,14) \psframe[fillstyle=solid, fillcolor=darkgray](2,0)(3,1)
\psframe[fillstyle=solid, fillcolor=lightgray](0,0)(1,3) \psframe[fillstyle=solid, fillcolor=lightgray](3,2)(4,4) \psframe[fillstyle=solid, fillcolor=lightgray](3,4)(14,3) \psgrid[subgriddiv=1,gridlabels=0,linewidth=0.05mm](0,0)(14,14)
\psframe[linewidth=0.5mm](9,9)(14,14) \psframe[linewidth=0.5mm](4,4)(8,8)
\psline(0.75,3.25)(0.25,3.75)\psline(0.25,3.25)(0.75,3.75)\psline(1.75,2.25)(1.25,2.75)\psline(1.25,2.25)(1.75,2.75)
\psline(2.75,1.25)(2.25,1.75)\psline(2.25,1.25)(2.75,1.75)\psline(3.25,0.25)(3.75,0.75)\psline(3.75,0.25)(3.25,0.75)
\psline(4.25,4.25)(4.75,4.75)\psline(4.75,4.25)(4.25,4.75)\psline(5.25,5.25)(5.75,5.75)\psline(5.75,5.25)(5.25,5.75)
\psline(6.25,6.25)(6.75,6.75)\psline(6.75,6.25)(6.25,6.75)\psline(7.25,7.25)(7.75,7.75)\psline(7.75,7.25)(7.25,7.75)
\psline(8.25,8.25)(8.75,8.75)\psline(8.75,8.25)(8.25,8.75)\psline(9.25,9.25)(9.75,9.75)\psline(9.75,9.25)(9.25,9.75)
\psline(10.25,10.25)(10.75,10.75)\psline(10.75,10.25)(10.25,10.75)
\psline(11.25,11.25)(11.75,11.75)\psline(11.75,11.25)(11.25,11.75)
\psline(12.25,12.25)(12.75,12.75)\psline(12.75,12.25)(12.25,12.75)
\psline(13.25,13.25)(13.75,13.75)\psline(13.75,13.25)(13.25,13.75)
\pscircle(0.5,12.5){0.3}\pscircle(1.5,8.5){0.3}\pscircle(2.5,3.5){0.3}\pscircle(3.5,7.5){0.3}\pscircle(4.5,13.5){0.3}\pscircle(5.5,2.5){0.3}\pscircle(6.5,4.5){0.3}\pscircle(7.5,5.5){0.3}\pscircle(8.5,1.5){0.3}\pscircle(9.5,0.5){0.3}\pscircle(10.5,6.5){0.3}\pscircle(11.5,9.5){0.3}\pscircle(12.5,10.5){0.3}\pscircle(13.5,11.5){0.3}
\psdots*[dotsize=.5](0,3)(1,0)(2,1)(3,4)(4,2)
\psdots*[dotsize=.5](5,5)(6,6)(7,7)(8,8)(9,9)(10,10)(11,11)(12,12)(13,13)
\end{pspicture} }\subfigure[$x_7$]{ \begin{pspicture}(0,0)(4.2,4.2) \psset{xunit=0.3cm,yunit=0.3cm, runit=0.3} \psframe[fillstyle=solid, fillcolor=darkgray](2,4)(3,14)
\psframe[fillstyle=solid, fillcolor=lightgray](0,0)(2,1) \psframe[fillstyle=solid, fillcolor=lightgray](3,2)(4,4) \psgrid[subgriddiv=1,gridlabels=0,linewidth=0.05mm](0,0)(14,14)
\psframe[linewidth=0.5mm](9,9)(14,14) \psframe[linewidth=0.5mm](4,4)(8,8)
\psline(0.75,3.25)(0.25,3.75)\psline(0.25,3.25)(0.75,3.75)\psline(1.75,2.25)(1.25,2.75)\psline(1.25,2.25)(1.75,2.75)
\psline(2.75,1.25)(2.25,1.75)\psline(2.25,1.25)(2.75,1.75)\psline(3.25,0.25)(3.75,0.75)\psline(3.75,0.25)(3.25,0.75)
\psline(4.25,4.25)(4.75,4.75)\psline(4.75,4.25)(4.25,4.75)\psline(5.25,5.25)(5.75,5.75)\psline(5.75,5.25)(5.25,5.75)
\psline(6.25,6.25)(6.75,6.75)\psline(6.75,6.25)(6.25,6.75)\psline(7.25,7.25)(7.75,7.75)\psline(7.75,7.25)(7.25,7.75)
\psline(8.25,8.25)(8.75,8.75)\psline(8.75,8.25)(8.25,8.75)\psline(9.25,9.25)(9.75,9.75)\psline(9.75,9.25)(9.25,9.75)
\psline(10.25,10.25)(10.75,10.75)\psline(10.75,10.25)(10.25,10.75)
\psline(11.25,11.25)(11.75,11.75)\psline(11.75,11.25)(11.25,11.75)
\psline(12.25,12.25)(12.75,12.75)\psline(12.75,12.25)(12.25,12.75)
\psline(13.25,13.25)(13.75,13.75)\psline(13.75,13.25)(13.25,13.75)
\pscircle(0.5,12.5){0.3}\pscircle(1.5,8.5){0.3}\pscircle(2.5,3.5){0.3}\pscircle(3.5,7.5){0.3}\pscircle(4.5,13.5){0.3}\pscircle(5.5,2.5){0.3}\pscircle(6.5,4.5){0.3}\pscircle(7.5,5.5){0.3}\pscircle(8.5,1.5){0.3}\pscircle(9.5,0.5){0.3}\pscircle(10.5,6.5){0.3}\pscircle(11.5,9.5){0.3}\pscircle(12.5,10.5){0.3}\pscircle(13.5,11.5){0.3}
\psdots*[dotsize=.5](0,1)(1,3)(2,0)(3,4)(4,2)
\psdots*[dotsize=.5](5,5)(6,6)(7,7)(8,8)(9,9)(10,10)(11,11)(12,12)(13,13)
\end{pspicture} }

\subfigure[$y_8$]{ \begin{pspicture}(0,0)(4.2,4.2) \psset{xunit=0.3cm,yunit=0.3cm, runit=0.3} \psframe[fillstyle=solid, fillcolor=darkgray](0,0)(1,3) \psframe[fillstyle=solid, fillcolor=darkgray](0,0)(2,1) \psgrid[subgriddiv=1,gridlabels=0,linewidth=0.05mm](0,0)(14,14)
\psframe[linewidth=0.5mm](9,9)(14,14) \psframe[linewidth=0.5mm](4,4)(8,8)
\psline(0.75,3.25)(0.25,3.75)\psline(0.25,3.25)(0.75,3.75)\psline(1.75,2.25)(1.25,2.75)\psline(1.25,2.25)(1.75,2.75)
\psline(2.75,1.25)(2.25,1.75)\psline(2.25,1.25)(2.75,1.75)\psline(3.25,0.25)(3.75,0.75)\psline(3.75,0.25)(3.25,0.75)
\psline(4.25,4.25)(4.75,4.75)\psline(4.75,4.25)(4.25,4.75)\psline(5.25,5.25)(5.75,5.75)\psline(5.75,5.25)(5.25,5.75)
\psline(6.25,6.25)(6.75,6.75)\psline(6.75,6.25)(6.25,6.75)\psline(7.25,7.25)(7.75,7.75)\psline(7.75,7.25)(7.25,7.75)
\psline(8.25,8.25)(8.75,8.75)\psline(8.75,8.25)(8.25,8.75)\psline(9.25,9.25)(9.75,9.75)\psline(9.75,9.25)(9.25,9.75)
\psline(10.25,10.25)(10.75,10.75)\psline(10.75,10.25)(10.25,10.75)
\psline(11.25,11.25)(11.75,11.75)\psline(11.75,11.25)(11.25,11.75)
\psline(12.25,12.25)(12.75,12.75)\psline(12.75,12.25)(12.25,12.75)
\psline(13.25,13.25)(13.75,13.75)\psline(13.75,13.25)(13.25,13.75)
\pscircle(0.5,12.5){0.3}\pscircle(1.5,8.5){0.3}\pscircle(2.5,3.5){0.3}\pscircle(3.5,7.5){0.3}\pscircle(4.5,13.5){0.3}\pscircle(5.5,2.5){0.3}\pscircle(6.5,4.5){0.3}\pscircle(7.5,5.5){0.3}\pscircle(8.5,1.5){0.3}\pscircle(9.5,0.5){0.3}\pscircle(10.5,6.5){0.3}\pscircle(11.5,9.5){0.3}\pscircle(12.5,10.5){0.3}\pscircle(13.5,11.5){0.3}
\psdots*[dotsize=.5](0,0)(1,3)(2,1)(3,4)(4,2)
\psdots*[dotsize=.5](5,5)(6,6)(7,7)(8,8)(9,9)(10,10)(11,11)(12,12)(13,13)
\end{pspicture}

}\subfigure[$y_9$]{ \begin{pspicture}(0,0)(4.2,4.2) \psset{xunit=0.3cm,yunit=0.3cm, runit=0.3} \psframe[fillstyle=solid, fillcolor=darkgray](3,2)(4,4)
\psframe[fillstyle=solid, fillcolor=lightgray](1,0)(2,1) \psgrid[subgriddiv=1,gridlabels=0,linewidth=0.05mm](0,0)(14,14)
\psframe[linewidth=0.5mm](9,9)(14,14) \psframe[linewidth=0.5mm](4,4)(8,8)
\psline(0.75,3.25)(0.25,3.75)\psline(0.25,3.25)(0.75,3.75)\psline(1.75,2.25)(1.25,2.75)\psline(1.25,2.25)(1.75,2.75)
\psline(2.75,1.25)(2.25,1.75)\psline(2.25,1.25)(2.75,1.75)\psline(3.25,0.25)(3.75,0.75)\psline(3.75,0.25)(3.25,0.75)
\psline(4.25,4.25)(4.75,4.75)\psline(4.75,4.25)(4.25,4.75)\psline(5.25,5.25)(5.75,5.75)\psline(5.75,5.25)(5.25,5.75)
\psline(6.25,6.25)(6.75,6.75)\psline(6.75,6.25)(6.25,6.75)\psline(7.25,7.25)(7.75,7.75)\psline(7.75,7.25)(7.25,7.75)
\psline(8.25,8.25)(8.75,8.75)\psline(8.75,8.25)(8.25,8.75)\psline(9.25,9.25)(9.75,9.75)\psline(9.75,9.25)(9.25,9.75)
\psline(10.25,10.25)(10.75,10.75)\psline(10.75,10.25)(10.25,10.75)
\psline(11.25,11.25)(11.75,11.75)\psline(11.75,11.25)(11.25,11.75)
\psline(12.25,12.25)(12.75,12.75)\psline(12.75,12.25)(12.25,12.75)
\psline(13.25,13.25)(13.75,13.75)\psline(13.75,13.25)(13.25,13.75)
\pscircle(0.5,12.5){0.3}\pscircle(1.5,8.5){0.3}\pscircle(2.5,3.5){0.3}\pscircle(3.5,7.5){0.3}\pscircle(4.5,13.5){0.3}\pscircle(5.5,2.5){0.3}\pscircle(6.5,4.5){0.3}\pscircle(7.5,5.5){0.3}\pscircle(8.5,1.5){0.3}\pscircle(9.5,0.5){0.3}\pscircle(10.5,6.5){0.3}\pscircle(11.5,9.5){0.3}\pscircle(12.5,10.5){0.3}\pscircle(13.5,11.5){0.3}
\psdots*[dotsize=.5](0,3)(1,0)(2,1)(3,2)(4,4)
\psdots*[dotsize=.5](5,5)(6,6)(7,7)(8,8)(9,9)(10,10)(11,11)(12,12)(13,13)
\end{pspicture} }\subfigure[$y_{10}$]{ \begin{pspicture}(0,0)(4.2,4.2) \psset{xunit=0.3cm,yunit=0.3cm, runit=0.3} \psframe[fillstyle=solid, fillcolor=darkgray](3,4)(14,3) \psframe[fillstyle=solid, fillcolor=lightgray](1,0)(2,1) \psgrid[subgriddiv=1,gridlabels=0,linewidth=0.05mm](0,0)(14,14)
\psframe[linewidth=0.5mm](9,9)(14,14) \psframe[linewidth=0.5mm](4,4)(8,8)
\psline(0.75,3.25)(0.25,3.75)\psline(0.25,3.25)(0.75,3.75)\psline(1.75,2.25)(1.25,2.75)\psline(1.25,2.25)(1.75,2.75)
\psline(2.75,1.25)(2.25,1.75)\psline(2.25,1.25)(2.75,1.75)\psline(3.25,0.25)(3.75,0.75)\psline(3.75,0.25)(3.25,0.75)
\psline(4.25,4.25)(4.75,4.75)\psline(4.75,4.25)(4.25,4.75)\psline(5.25,5.25)(5.75,5.75)\psline(5.75,5.25)(5.25,5.75)
\psline(6.25,6.25)(6.75,6.75)\psline(6.75,6.25)(6.25,6.75)\psline(7.25,7.25)(7.75,7.75)\psline(7.75,7.25)(7.25,7.75)
\psline(8.25,8.25)(8.75,8.75)\psline(8.75,8.25)(8.25,8.75)\psline(9.25,9.25)(9.75,9.75)\psline(9.75,9.25)(9.25,9.75)
\psline(10.25,10.25)(10.75,10.75)\psline(10.75,10.25)(10.25,10.75)
\psline(11.25,11.25)(11.75,11.75)\psline(11.75,11.25)(11.25,11.75)
\psline(12.25,12.25)(12.75,12.75)\psline(12.75,12.25)(12.25,12.75)
\psline(13.25,13.25)(13.75,13.75)\psline(13.75,13.25)(13.25,13.75)
\pscircle(0.5,12.5){0.3}\pscircle(1.5,8.5){0.3}\pscircle(2.5,3.5){0.3}\pscircle(3.5,7.5){0.3}\pscircle(4.5,13.5){0.3}\pscircle(5.5,2.5){0.3}\pscircle(6.5,4.5){0.3}\pscircle(7.5,5.5){0.3}\pscircle(8.5,1.5){0.3}\pscircle(9.5,0.5){0.3}\pscircle(10.5,6.5){0.3}\pscircle(11.5,9.5){0.3}\pscircle(12.5,10.5){0.3}\pscircle(13.5,11.5){0.3}
\psdots*[dotsize=.5](0,4)(1,0)(2,1)(3,3)(4,2)
\psdots*[dotsize=.5](5,5)(6,6)(7,7)(8,8)(9,9)(10,10)(11,11)(12,12)(13,13)
\end{pspicture} }

\subfigure[$y_{11}$]{ \begin{pspicture}(0,0)(4.2,4.2) \psset{xunit=0.3cm,yunit=0.3cm, runit=0.3} \psframe[fillstyle=solid, fillcolor=darkgray](3,2)(4,4)
\psframe[fillstyle=solid, fillcolor=lightgray](0,1)(1,3)  \psgrid[subgriddiv=1,gridlabels=0,linewidth=0.05mm](0,0)(14,14)
\psframe[linewidth=0.5mm](9,9)(14,14) \psframe[linewidth=0.5mm](4,4)(8,8)
\psline(0.75,3.25)(0.25,3.75)\psline(0.25,3.25)(0.75,3.75)\psline(1.75,2.25)(1.25,2.75)\psline(1.25,2.25)(1.75,2.75)
\psline(2.75,1.25)(2.25,1.75)\psline(2.25,1.25)(2.75,1.75)\psline(3.25,0.25)(3.75,0.75)\psline(3.75,0.25)(3.25,0.75)
\psline(4.25,4.25)(4.75,4.75)\psline(4.75,4.25)(4.25,4.75)\psline(5.25,5.25)(5.75,5.75)\psline(5.75,5.25)(5.25,5.75)
\psline(6.25,6.25)(6.75,6.75)\psline(6.75,6.25)(6.25,6.75)\psline(7.25,7.25)(7.75,7.75)\psline(7.75,7.25)(7.25,7.75)
\psline(8.25,8.25)(8.75,8.75)\psline(8.75,8.25)(8.25,8.75)\psline(9.25,9.25)(9.75,9.75)\psline(9.75,9.25)(9.25,9.75)
\psline(10.25,10.25)(10.75,10.75)\psline(10.75,10.25)(10.25,10.75)
\psline(11.25,11.25)(11.75,11.75)\psline(11.75,11.25)(11.25,11.75)
\psline(12.25,12.25)(12.75,12.75)\psline(12.75,12.25)(12.25,12.75)
\psline(13.25,13.25)(13.75,13.75)\psline(13.75,13.25)(13.25,13.75)
\pscircle(0.5,12.5){0.3}\pscircle(1.5,8.5){0.3}\pscircle(2.5,3.5){0.3}\pscircle(3.5,7.5){0.3}\pscircle(4.5,13.5){0.3}\pscircle(5.5,2.5){0.3}\pscircle(6.5,4.5){0.3}\pscircle(7.5,5.5){0.3}\pscircle(8.5,1.5){0.3}\pscircle(9.5,0.5){0.3}\pscircle(10.5,6.5){0.3}\pscircle(11.5,9.5){0.3}\pscircle(12.5,10.5){0.3}\pscircle(13.5,11.5){0.3}
\psdots*[dotsize=.5](0,1)(1,3)(2,0)(3,2)(4,4)
\psdots*[dotsize=.5](5,5)(6,6)(7,7)(8,8)(9,9)(10,10)(11,11)(12,12)(13,13)
\end{pspicture} }\subfigure[$x_8$]{ \begin{pspicture}(0,0)(4.2,4.2) \psset{xunit=0.3cm,yunit=0.3cm, runit=0.3} \psframe[fillstyle=solid, fillcolor=darkgray](1,3)(0,1) \psframe[fillstyle=solid, fillcolor=darkgray](2,1)(1,0)
\psframe[fillstyle=solid, fillcolor=lightgray](4,4)(14,3) \psgrid[subgriddiv=1,gridlabels=0,linewidth=0.05mm](0,0)(14,14)
\psframe[linewidth=0.5mm](9,9)(14,14) \psframe[linewidth=0.5mm](4,4)(8,8)
\psline(0.75,3.25)(0.25,3.75)\psline(0.25,3.25)(0.75,3.75)\psline(1.75,2.25)(1.25,2.75)\psline(1.25,2.25)(1.75,2.75)
\psline(2.75,1.25)(2.25,1.75)\psline(2.25,1.25)(2.75,1.75)\psline(3.25,0.25)(3.75,0.75)\psline(3.75,0.25)(3.25,0.75)
\psline(4.25,4.25)(4.75,4.75)\psline(4.75,4.25)(4.25,4.75)\psline(5.25,5.25)(5.75,5.75)\psline(5.75,5.25)(5.25,5.75)
\psline(6.25,6.25)(6.75,6.75)\psline(6.75,6.25)(6.25,6.75)\psline(7.25,7.25)(7.75,7.75)\psline(7.75,7.25)(7.25,7.75)
\psline(8.25,8.25)(8.75,8.75)\psline(8.75,8.25)(8.25,8.75)\psline(9.25,9.25)(9.75,9.75)\psline(9.75,9.25)(9.25,9.75)
\psline(10.25,10.25)(10.75,10.75)\psline(10.75,10.25)(10.25,10.75)
\psline(11.25,11.25)(11.75,11.75)\psline(11.75,11.25)(11.25,11.75)
\psline(12.25,12.25)(12.75,12.75)\psline(12.75,12.25)(12.25,12.75)
\psline(13.25,13.25)(13.75,13.75)\psline(13.75,13.25)(13.25,13.75)
\pscircle(0.5,12.5){0.3}\pscircle(1.5,8.5){0.3}\pscircle(2.5,3.5){0.3}\pscircle(3.5,7.5){0.3}\pscircle(4.5,13.5){0.3}\pscircle(5.5,2.5){0.3}\pscircle(6.5,4.5){0.3}\pscircle(7.5,5.5){0.3}\pscircle(8.5,1.5){0.3}\pscircle(9.5,0.5){0.3}\pscircle(10.5,6.5){0.3}\pscircle(11.5,9.5){0.3}\pscircle(12.5,10.5){0.3}\pscircle(13.5,11.5){0.3}
\psdots*[dotsize=.5](0,3)(1,1)(2,0)(3,2)(4,4)
\psdots*[dotsize=.5](5,5)(6,6)(7,7)(8,8)(9,9)(10,10)(11,11)(12,12)(13,13)
\end{pspicture} }\subfigure[$x_9$]{ \begin{pspicture}(0,0)(4.2,4.2) \psset{xunit=0.3cm,yunit=0.3cm, runit=0.3} \psframe[fillstyle=solid, fillcolor=darkgray](1,0)(2,1) \psframe[fillstyle=solid, fillcolor=lightgray](3,3)(4,2) \psgrid[subgriddiv=1,gridlabels=0,linewidth=0.05mm](0,0)(14,14)
\psframe[linewidth=0.5mm](9,9)(14,14) \psframe[linewidth=0.5mm](4,4)(8,8)
\psline(0.75,3.25)(0.25,3.75)\psline(0.25,3.25)(0.75,3.75)\psline(1.75,2.25)(1.25,2.75)\psline(1.25,2.25)(1.75,2.75)
\psline(2.75,1.25)(2.25,1.75)\psline(2.25,1.25)(2.75,1.75)\psline(3.25,0.25)(3.75,0.75)\psline(3.75,0.25)(3.25,0.75)
\psline(4.25,4.25)(4.75,4.75)\psline(4.75,4.25)(4.25,4.75)\psline(5.25,5.25)(5.75,5.75)\psline(5.75,5.25)(5.25,5.75)
\psline(6.25,6.25)(6.75,6.75)\psline(6.75,6.25)(6.25,6.75)\psline(7.25,7.25)(7.75,7.75)\psline(7.75,7.25)(7.25,7.75)
\psline(8.25,8.25)(8.75,8.75)\psline(8.75,8.25)(8.25,8.75)\psline(9.25,9.25)(9.75,9.75)\psline(9.75,9.25)(9.25,9.75)
\psline(10.25,10.25)(10.75,10.75)\psline(10.75,10.25)(10.25,10.75)
\psline(11.25,11.25)(11.75,11.75)\psline(11.75,11.25)(11.25,11.75)
\psline(12.25,12.25)(12.75,12.75)\psline(12.75,12.25)(12.25,12.75)
\psline(13.25,13.25)(13.75,13.75)\psline(13.75,13.25)(13.25,13.75)
\pscircle(0.5,12.5){0.3}\pscircle(1.5,8.5){0.3}\pscircle(2.5,3.5){0.3}\pscircle(3.5,7.5){0.3}\pscircle(4.5,13.5){0.3}\pscircle(5.5,2.5){0.3}\pscircle(6.5,4.5){0.3}\pscircle(7.5,5.5){0.3}\pscircle(8.5,1.5){0.3}\pscircle(9.5,0.5){0.3}\pscircle(10.5,6.5){0.3}\pscircle(11.5,9.5){0.3}\pscircle(12.5,10.5){0.3}\pscircle(13.5,11.5){0.3}
\psdots*[dotsize=.5](0,4)(1,1)(2,0)(3,3)(4,2)
\psdots*[dotsize=.5](5,5)(6,6)(7,7)(8,8)(9,9)(10,10)(11,11)(12,12)(13,13)
\end{pspicture} }

\subfigure[$y_{12}$]{ \begin{pspicture}(0,0)(4.2,4.2) \psset{xunit=0.3cm,yunit=0.3cm, runit=0.3} \psframe[fillstyle=solid, fillcolor=darkgray](3,3)(4,2) \psframe[fillstyle=solid, fillcolor=darkgray](4,4)(14,3) \psgrid[subgriddiv=1,gridlabels=0,linewidth=0.05mm](0,0)(14,14)
\psframe[linewidth=0.5mm](9,9)(14,14) \psframe[linewidth=0.5mm](4,4)(8,8)
\psline(0.75,3.25)(0.25,3.75)\psline(0.25,3.25)(0.75,3.75)\psline(1.75,2.25)(1.25,2.75)\psline(1.25,2.25)(1.75,2.75)
\psline(2.75,1.25)(2.25,1.75)\psline(2.25,1.25)(2.75,1.75)\psline(3.25,0.25)(3.75,0.75)\psline(3.75,0.25)(3.25,0.75)
\psline(4.25,4.25)(4.75,4.75)\psline(4.75,4.25)(4.25,4.75)\psline(5.25,5.25)(5.75,5.75)\psline(5.75,5.25)(5.25,5.75)
\psline(6.25,6.25)(6.75,6.75)\psline(6.75,6.25)(6.25,6.75)\psline(7.25,7.25)(7.75,7.75)\psline(7.75,7.25)(7.25,7.75)
\psline(8.25,8.25)(8.75,8.75)\psline(8.75,8.25)(8.25,8.75)\psline(9.25,9.25)(9.75,9.75)\psline(9.75,9.25)(9.25,9.75)
\psline(10.25,10.25)(10.75,10.75)\psline(10.75,10.25)(10.25,10.75)
\psline(11.25,11.25)(11.75,11.75)\psline(11.75,11.25)(11.25,11.75)
\psline(12.25,12.25)(12.75,12.75)\psline(12.75,12.25)(12.25,12.75)
\psline(13.25,13.25)(13.75,13.75)\psline(13.75,13.25)(13.25,13.75)
\pscircle(0.5,12.5){0.3}\pscircle(1.5,8.5){0.3}\pscircle(2.5,3.5){0.3}\pscircle(3.5,7.5){0.3}\pscircle(4.5,13.5){0.3}\pscircle(5.5,2.5){0.3}\pscircle(6.5,4.5){0.3}\pscircle(7.5,5.5){0.3}\pscircle(8.5,1.5){0.3}\pscircle(9.5,0.5){0.3}\pscircle(10.5,6.5){0.3}\pscircle(11.5,9.5){0.3}\pscircle(12.5,10.5){0.3}\pscircle(13.5,11.5){0.3}
\psdots*[dotsize=.5](0,4)(1,1)(2,0)(3,2)(4,3)
\psdots*[dotsize=.5](5,5)(6,6)(7,7)(8,8)(9,9)(10,10)(11,11)(12,12)(13,13)
\end{pspicture} }
\caption{Grid states for $G_2(a,b)$; here $(a,b)=(1,1)$.}
\end{figure}

\end{document}